\theoremstyle{plain}
\newtheorem{theorem}{Theorem}[section]
\newtheorem{lemma}[theorem]{Lemma}
\newtheorem{corollary}[theorem]{Corollary}
\newtheorem{proposition}[theorem]{Proposition}
\theoremstyle{definition}
\newtheorem{definition}[theorem]{Definition}
\newtheorem{example}[theorem]{Example}
\theoremstyle{remark}
\newtheorem{remark}[theorem]{Remark}
\date{}
\title{\bf Variation of the local topological structure of graph embeddings}
\author{Ricky X. F. Chen$^a$, Christian M. Reidys$^b$\\
\small Department of Mathematics and Computer Science\\[-0.8ex]
\small University of Southern Denmark, Campusvej 55\\[-0.8ex]
\small DK-5230, Odense M, Denmark\\
\small\tt $^a$chen.ricky1982@gmail.com,$^b$duck@santafe.edu
}
\begin{document}

\maketitle

\begin{abstract}
The $2$-cell embeddings of graphs on closed surfaces have been widely studied.
It is well known that ($2$-cell) embedding a given graph $G$ on a closed
orientable surface is equivalent to cyclically ordering
the edges incident to each vertex of $G$. In this paper, we study the following problem:
given a genus $g$ embedding $\mathbb{E}$ of the graph $G$, if we randomly rearrange the edges
around a vertex, i.e., re-embedding,
what is the probability of the resulting embedding $\mathbb{E}'$ having genus $g+\Delta g$?
We give a formula to compute this probability.
Meanwhile, some other known and unknown results are also obtained. For example,
we show that the probability of preserving the genus
is at least $\frac{2}{deg(v)+2}$ for re-embedding any vertex $v$ of degree $deg(v)$
in a one-face embedding; and we obtain a necessary condition for a given embedding of $G$ to
be an embedding with the minimum genus.

\bigskip\noindent \textbf{Keywords:} Graph embedding; Plane permutation; Genus; Hypermap

\noindent\small Mathematics Subject Classifications: 05C30; 05C10; 97K30
\end{abstract}

\section{Introduction}

Graph embedding is one of the most important topics in topological graph theory.
In particular, $2$-cell embeddings of graphs (loops and multiple edges allowed) have been widely studied.
A $2$-cell embedding of a given graph $G$ on a closed surface of genus $g$, $S_g$,
is an embedding on $S_g$ such that every face is homeomorphic to an open disk.
A $2$-cell embedding is also called a map.
The closed surfaces could be either orientable or unorientable. In this paper,
we restrict ourselves to orientable case.
Besides, by embedding we always mean $2$-cell embedding.
Note, by the classification theorem, any orientable
closed surface of genus $g$ is homeomorphic to the connected sum of $g$ tori.

There are many interesting topics on graph embedding in the literature. For instance,
given a graph $G$, what is the minimum (resp., maximum) genus $g$ such that there exists a $2$-cell embedding
of $G$ on $S_g$? For studies in detail, we refer the readers
to~\cite{furst,nebe,xuong1,xuong2,jung,white,liu,liu2,youngs,thoma}
and references therein.
Let $g_{min}(G)$ and $g_{max}(G)$ denote the minimum and the maximum
genus $g$ of the embeddings of $G$, respectively.
In Duke~\cite{duke}, an ``interpolation" theorem showed that for any $g_{min}(G)\leq g \leq g_{max}(G)$,
there exists an embedding of $G$ on $S_g$.
Assume $G$ has $e$ edges and $v$ vertices, and embedded on $S_g$ via the embedding $\mathbb{E}$.
The number $\beta(G)=e-v+1$
is called the betti number of $G$. According to Euler's characteristic formula, there holds
\begin{align}
v-e+f=2-2g \quad \Longleftrightarrow \quad 2g=\beta(G)+1-f,
\end{align}
where $f\geq 1$ is the number of faces of $\mathbb{E}$. Thus,
the largest possible value of $g$ is $\lfloor\frac{\beta(G)}{2}\rfloor$.
If $g_{max}(G)=\lfloor\frac{\beta(G)}{2}\rfloor$, $G$ is called upper
embeddable. When is $G$ upper embeddable? See studies in~{\cite{furst,nebe,xuong1,xuong2,jung,white,liu,liu2}}.

It is well known that an embedding of $G$ on a closed orientable surface can
be equivalently represented by $G$ with a specified cyclic order of edges around (i.e., incident to)
each vertex of $G$, i.e., the topological structure of the embedding is implied
in these cyclic orderings of edges~\cite{edmonds,youngs}. Any variation of the local topological structure around a vertex,
i.e., the cyclic order of edges around the vertex, may change the topological properties of
the embedding, e.g., the genus of the embedding.

Plane permutations were recently used to study hypermaps in Chen and Reidys~\cite{chr-1}.
It proved to be quite effective to enumerate hypermaps with one face.
Besides, plane permutations allow to study the transposition and
block-interchange distance of permutations as well as the reversal
distance of signed permutations in a unified simple framework~\cite{chr-2}.
Since maps are specific class of hypermaps,
it is natural to study graph embeddings using plane permutations as well.

This paper is organized as follows: in Section $2$,
we recall some basics of plane permutations~\cite{chr-1} for later use.
In Section $3$, we study embeddings with one face. These objects have been studied in many
fields~\cite{chap,walsh1,lzv,hz,zv,IJ,jac,pen,reidys} where the enumeration aspect
is the main interest. Our interest in this paper is to understand the following problems:
assume there exists a one-face embedding $\mathbb{E}$ for the graph $G$.
How many different ways are there of changing the local embedding (re-embedding) around a vertex
without changing the genus? By changing the local embedding, we mean changing
the cyclic order of edges around the vertex. For a given vertex, is there another local embedding around it to
preserve the genus? As results, we show that the probability of preserving the genus
is at least $\frac{2}{deg(v)+2}$ for re-embedding any vertex $v$ of degree (i.e., valence) $deg(v)$.
Also, there is at least one alternative way to re-embed a vertex $v$ preserving the genus if
$deg(v)\geq 4$. In Section~$4$, in order to study embeddings with more than
one face, we generalize plane permutations into $k$-cyc plane permutations.
We study more general questions, e.g., given an embedding $\mathbb{E}$ for the graph $G$,
what is the maximum (resp., minimum) genus can be achieved by changing the local embedding of one of the
vertices of $G$? For a vertex with larger degree, there are more
alternatives to rearrange the edges around it.
Is it true that re-embedding a vertex with larger degree always achieve a higher (resp., lower) genus than
re-embedding a vertex with smaller degree? and so on.
As results, we obtain a local version of the interpolation theorem which also provides an
easy approach to roughly estimate the range $[g_{min}(G), g_{max}(G)]$, as well as
a necessary condition for an embedding of $G$ to be an embedding with the minimum genus.


\section{Plane permutations}


Let $\mathcal{S}_n$ denote the group of permutations, i.e.~the group
of bijections from $[n]=\{1,\dots,n\}$ to $[n]$, where the multiplication
is the composition of maps.
We shall discuss the following three representations of a permutation $\pi$:\\
\emph{two-line form:} the top line lists all elements in $[n]$, following the natural order.
The bottom line lists the corresponding images of elements on the top line, i.e.
\begin{eqnarray*}
\pi=\left(\begin{array}{ccccccc}
1&2& 3&\cdots &n-2&{n-1}&n\\
\pi(1)&\pi(2)&\pi(3)&\cdots &\pi({n-2}) &\pi({n-1})&\pi(n)
\end{array}\right).
\end{eqnarray*}
\emph{cycle form:} regarding $\langle \pi\rangle$ as a cyclic group, we represent $\pi$ by its
collection of orbits (cycles).
The set consisting of the lengths of these disjoint cycles is called the cycle-type of $\pi$.
We can encode this set into a non-increasing integer sequence $\lambda=\lambda_1
\lambda_2\cdots$, where $\sum_i \lambda_i=n$, or as $1^{a_1}2^{a_2}\cdots n^{a_n}$, where
we have $a_i$ cycles of length $i$.
A cycle of length $k$ will be called a $k$-cycle. A cycle of odd and even length will be called an odd
and even cycle, respectively. It is well known that all permutations of a same cycle-type
forms a conjugacy class of $\mathcal{S}_n$.

\begin{definition}[Plane permutation]
A plane permutation on $[n]$ is a pair $\mathfrak{p}=(s,\pi)$ where $s=(s_i)_{i=0}^{n-1}$
is an $n$-cycle and $\pi$ is an arbitrary permutation on $[n]$. The permutation $D_{\mathfrak{p}}=
s\circ \pi^{-1}$ is called the diagonal of $\mathfrak{p}$.
\end{definition}\label{2def1}

Given $s=(s_0s_1\cdots s_{n-1})$,
a plane permutation $\mathfrak{p}=(s,\pi)$ can be represented by two aligned rows:
\begin{equation}
(s,\pi)=\left(\begin{array}{ccccc}
s_0&s_1&\cdots &s_{n-2}&s_{n-1}\\
\pi(s_0)&\pi(s_1)&\cdots &\pi(s_{n-2}) &\pi(s_{n-1})
\end{array}\right)
\end{equation}
Indeed, $D_{\mathfrak{p}}$ is determined by the diagonal-pairs (cyclically) in the two-line
representation here, i.e., $D_{\mathfrak{p}}(\pi(s_{i-1}))=s_i$ for $0<i< n$, and
$D_{\mathfrak{p}}(\pi(s_{n-1}))=s_0$.
For convenience, we always assume $s_0=1$ in the following and
we mean by ``the cycles of $\mathfrak{p}=(s,\pi)$'' the cycles of $\pi$.

Given a plane permutation $\mathfrak{p}=(s,\pi)$ on $[n]$ and a sequence $h=h_1h_2\cdots h_{n-1}$ on $[n-1]$,
let $s^h=(s_0, s_{h_1}, s_{h_2}, \ldots s_{h_{n-1}})$ and $\pi^h=D_{\mathfrak{p}} \circ s^h$.
We write $(s^h, \pi^h)=\chi_h \circ (s,\pi)$.
In particular, if $h=12\cdots (i-1) \underline{(j+1)\cdots l},  \underline{i \cdots j} (l+1)\cdots (n-1)$
where $0< i\leq j <l <n$,
we have
$$
s^h=(s_0,s_1,\dots,s_{i-1},\underline{s_{j+1},\dots,s_l},
\underline{s_i,\dots,s_j},s_{l+1},\dots,s_{n-1}),
$$
i.e.~the $n$-cycle obtained by transposing the blocks $[s_i,s_j]$ and $[s_{j+1},s_l]$.
Then, $(s^h, \pi^h)$ can be represented as
\begin{eqnarray*}
\left(
\vcenter{\xymatrix@C=0pc@R=1pc{
\cdots s_{i-1}\ar@{->>}[d]  & s_{j+1}\ar@{--}[dl] &\cdots & s_{l-1}& s_l\ar@{--}[dl]\ar@{->>}[d] &
 s_i\ar@{--}[dl] &\cdots & s_{j-1} & s_{j}\ar@{--}[dl]\ar@{->>}[d] & s_{l+1}  \cdots\\
\cdots \pi(s_{j}) & \pi(s_{j+1}) & \cdots  & \pi(s_{l-1}) & \pi(s_{i-1}) & \pi(s_i) &\cdots & \pi(s_{j-1})&
\pi(s_l)& \pi(s_{l+1}) \cdots
}}
\right)
\end{eqnarray*}
Note that the bottom row of the two-row representation of $(s^h,\pi^h)$
is obtained by transposing the blocks $[\pi(s_{i-1}),\pi(s_{j-1})]$ and
$[\pi(s_{j}),\pi(s_{l-1})]$ of the bottom row of $(s,\pi)$. In this particular case,
we denote the sequence $h$ as $(i,j,j+1,l)$ for short and refer to $\chi_h$ a transpose.
For general $h$, we observe that the two-row form of $(s^h,\pi^h)$ is obtained by
rearranging the diagonal-pairs of $(s,\pi)$.
As a result, we observe

\begin{lemma}\cite{chr-1}\label{2lem1}
Given a plane permutation $(s,\pi)$ on $[n]$ and a transpose $\chi_h$, where $h=(i,j,j+1,l)$ and
$0< i\leq j <l <n$. Then $\pi(s_r)=\pi^h(s_r)$ if $r\in \{0,1,\ldots,n-1\}\setminus \{i-1,j,l\}$, and
$$
\pi^h(s_{i-1})=\pi(s_{j}), \quad \pi^h(s_j)=\pi(s_l), \quad \pi^h(s_l)=\pi(s_{i-1}).
$$
\end{lemma}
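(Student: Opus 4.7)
The plan is to exploit the defining relation $D_{\mathfrak{p}}(\pi(s_k))=s_{k+1}$ (indices cyclic, so $s_n=s_0$) together with its analogue $D_{\mathfrak{p}}(\pi^h(s^h_k))=s^h_{k+1}$; because $\chi_h$ preserves the diagonal, the \emph{same} permutation $D_{\mathfrak{p}}$ governs both plane permutations, and computing $\pi^h(s_r)$ reduces to identifying the cyclic successor of $s_r$ inside the new $n$-cycle $s^h$ and then inverting by $D_{\mathfrak{p}}$.

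First I would write $s^h$ explicitly. For the transpose with $h=(i,j,j+1,l)$, the blocks $[s_i,\ldots,s_j]$ and $[s_{j+1},\ldots,s_l]$ swap places, giving
$$ s^h=(s_0,\ldots,s_{i-1},s_{j+1},\ldots,s_l,s_i,\ldots,s_j,s_{l+1},\ldots,s_{n-1}). $$
A direct inspection of the two cycles $s$ and $s^h$ shows that the cyclic successor of $s_r$ is unchanged for every $r\notin\{i-1,j,l\}$ (the swapping acts as a block, so elements strictly interior to either block, as well as all elements outside $[i,l]$, inherit their old successors), while at the three boundary positions the successors become
$$ s^h(s_{i-1})=s_{j+1},\qquad s^h(s_j)=s_{l+1},\qquad s^h(s_l)=s_i. $$

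Rewriting the diagonal relation as $\pi(s_k)=D_{\mathfrak{p}}^{-1}(s_{k+1})$ and $\pi^h(s_r)=D_{\mathfrak{p}}^{-1}(s^h(s_r))$, the unchanged-successor indices immediately give $\pi^h(s_r)=\pi(s_r)$, while the three boundary indices yield
$$ \pi^h(s_{i-1})=D_{\mathfrak{p}}^{-1}(s_{j+1})=\pi(s_j),\ \ \pi^h(s_j)=D_{\mathfrak{p}}^{-1}(s_{l+1})=\pi(s_l),\ \ \pi^h(s_l)=D_{\mathfrak{p}}^{-1}(s_i)=\pi(s_{i-1}), $$
as claimed. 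The only delicate aspect is pure bookkeeping: one must verify case-by-case that no element interior to either swapped block has its successor altered, and the constraints $0<i\leq j<l<n$ must be used to ensure the three boundary elements $s_{i-1},s_j,s_l$ are pairwise distinct so the case analysis is genuinely disjoint. Once $s^h$ is written down, each case reduces to a single substitution.
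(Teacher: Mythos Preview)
Your argument is correct and matches the paper's treatment. The paper does not give a formal proof of this lemma (it is cited from \cite{chr-1}); instead, immediately before stating it, the paper displays the two-row form of $(s^h,\pi^h)$ with the swapped blocks and remarks that the bottom row is obtained from that of $(s,\pi)$ by transposing $[\pi(s_{i-1}),\pi(s_{j-1})]$ with $[\pi(s_j),\pi(s_{l-1})]$, after which the lemma is ``observed''. Your computation via $\pi^h(s_r)=D_{\mathfrak p}^{-1}(s^h(s_r))$ is exactly what underlies that display: the diagonal is fixed by construction, so $\pi^h$ is recovered from the successor function of $s^h$, and the only places where this successor differs from that of $s$ are the three block boundaries $s_{i-1},s_j,s_l$. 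The two presentations are the same argument, yours being the algebraic transcription of the paper's pictorial one.
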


We shall proceed by analyzing the induced changes of the $\pi$-cycles when passing
to $\pi^h$. By Lemma~\ref{2lem1}, only the $\pi$-cycles containing $s_{i-1}$, $s_{j}$,
$s_l$ will be affected.

\begin{lemma}\cite{chr-1}\label{2lem2}
Given $(s,\pi)$ and a transpose $\chi_h$ where $h=(i,j,j+1,l)$ and $0< i \leq j < l <n$,
then there exist the following six scenarios for the pairs $(\pi,\pi^h)$:
\begin{center}
\begin{tabular}{|c|c|c|}
\hline
Case~$1$ &$\pi$& $(s_{i-1},v_1^i,\ldots v_{m_i}^i),  (s_j, v_1^j,\ldots v_{m_j}^j), (s_l,v_1^l,\ldots v_{m_l}^l)$\\\cline{2-3}
&$\pi^h$&$ (s_{i-1},v_1^j,\ldots v_{m_j}^j, s_{j}, v_1^l,\ldots v_{m_l}^l, s_l, v_1^i, \ldots v_{m_i}^i)$\\
\hline\hline
Case~$2$ &$\pi$& $(s_{i-1},v_1^i,\ldots v_{m_i}^i, s_l, v_1^l,\ldots v_{m_l}^l, s_j, v_1^j, \ldots v_{m_j}^j)$\\\cline{2-3}
&$\pi^h$& $(s_{i-1}, v_1^j,\ldots v_{m_j}^j),  (s_j, v_1^l,\ldots v_{m_l}^l), (s_l, v_1^i, \ldots v_{m_i}^i)$\\
\hline\hline
Case~$3$ &$\pi$& $(s_{i-1},v_1^i,\ldots v_{m_i}^i, s_j, v_1^j, \ldots v_{m_j}^j, s_l, v_1^l,\ldots v_{m_l}^l)$\\\cline{2-3}
&$\pi^h$& $(s_{i-1},v_1^j,\ldots v_{m_j}^j, s_l, v_1^i,\ldots v_{m_i}^i, s_j, v_1^l, \ldots v_{m_l}^l)$\\
\hline\hline
Case~$4$ &$\pi$& $(s_{i-1}, v_1^i, \ldots v_{m_i}^i, s_{j}, v_1^j, \ldots v_{m_j}^j), (s_l, v_1^l, \ldots v_{m_l}^l)$\\\cline{2-3}
&$\pi^h$&$ (s_{i-1}, v_1^j, \ldots v_{m_j}^j), (s_{j}, v_1^l, \ldots v_{m_l}^l, s_l, v_1^i, \ldots v_{m_i}^i)$\\
\hline\hline
Case~$5$ &$\pi$& $(s_{i-1}, v_1^i, \ldots v_{m_i}^i),  (s_{j}, v_1^j, \ldots v_{m_j}^j, s_l, v_1^l, \ldots v_{m_l}^l)$\\\cline{2-3}
&$\pi^h$&$ (s_{i-1}, v_1^{j},\ldots v_{m_j}^j, s_l, v_1^i, \ldots v_{m_i}^i), (s_{j}, v_1^l, \ldots v_{m_l}^l)$\\
\hline\hline
Case~$6$ &$\pi$& $(s_{i-1}, v_1^i, \ldots v_{m_i}^i, s_l, v_1^l, \ldots v_{m_l}^l), (s_{j}, v_1^j, \ldots v_{m_j}^j)$\\\cline{2-3}
&$\pi^h$&$ (s_{i-1}, v_1^j, \ldots v_{m_j}^j, s_{j}, v_1^l, \ldots v_{m_l}^l), (s_l, v_1^i, \ldots v_{m_i}^i)$\\
\hline
\end{tabular}
\end{center}
\end{lemma}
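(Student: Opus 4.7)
The plan is to reduce the statement to a single multiplication of $\pi$ by a $3$-cycle and then enumerate the cycle configurations of $\pi$ relative to the three marked elements $s_{i-1},s_j,s_l$.

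First I would invoke Lemma~\ref{2lem1}: $\pi^h$ agrees with $\pi$ everywhere except at $s_{i-1},s_j,s_l$, and the three prescribed values assemble to $\pi^h=\pi\circ\tau$ where $\tau=(s_{i-1}\;s_j\;s_l)$. Consequently, only the $\pi$-cycles meeting $\{s_{i-1},s_j,s_l\}$ are altered, and every resulting cycle of $\pi^h$ can be read off by tracing this modified map on those elements. Since $\tau$ is a $3$-cycle, classical cycle arithmetic predicts that the number of cycles changes by $\pm 2$ or stays the same; this is consistent with the six scenarios listed.

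Next I would organise the case analysis by how $s_{i-1},s_j,s_l$ are distributed among the cycles of $\pi$. There are three coarse scenarios, with sub-cases counted as follows: the three elements lie in three distinct cycles (one configuration, giving Case~$1$); exactly two of them share a cycle while the third sits alone, with three sub-cases depending on which element is the singleton (giving Cases~$4$,~$5$,~$6$); or all three lie in a single cycle, with two sub-cases according to the two inequivalent cyclic orders of $s_{i-1},s_j,s_l$ inside that cycle, namely $(s_{i-1},\dots,s_j,\dots,s_l,\dots)$ and $(s_{i-1},\dots,s_l,\dots,s_j,\dots)$ (giving Cases~$3$ and~$2$, respectively). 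This totals exactly six configurations, matching the statement.

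For each configuration I would spell out the cycles of $\pi$ using the shorthand $v^i_\bullet, v^j_\bullet, v^l_\bullet$ for the maximal blocks of elements sitting between the marked positions, and then directly trace the orbit of $s_{i-1}$ under $\pi^h=\pi\circ\tau$. The computation is uniform: follow $\pi$ as usual, but when a marked $s_\star$ is reached, output $\pi(\tau(s_\star))$ instead, which means ``jump to the $\pi$-successor of the next marked element in the $\tau$-order''. This reduces each case to a routine splice-and-concatenate of the three $v_\bullet$ blocks. Symmetries shorten the work considerably: because $\tau^{-1}=\tau^{2}$, Cases~$1$ and~$2$ are inverse to one another (swap the roles of $\pi$ and $\pi^h$), and Cases~$4$,~$5$,~$6$ are permuted amongst themselves by cyclically relabelling $(s_{i-1},s_j,s_l)$.

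The main obstacle is notational rather than conceptual: in Case~$3$ and in the mixed two-cycle Cases~$4$--$6$, the resulting order of the blocks $v^i_\bullet,v^j_\bullet,v^l_\bullet$ is easy to scramble. I would guard against such slips by checking each case on a small concrete example (say $n=6$ with a generic $\pi$ realising the configuration) before committing the general splice formula, and by confirming that the total number of elements listed in the new cycles matches $3+m_i+m_j+m_l$ in every case.
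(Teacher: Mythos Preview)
Your proposal is correct and follows essentially the same approach as the paper: both reduce to Lemma~\ref{2lem1} and then trace the $\pi^h$-orbits of $s_{i-1},s_j,s_l$ case by case. Your compact reformulation $\pi^h=\pi\circ\tau$ with $\tau=(s_{i-1}\,s_j\,s_l)$ is exactly the content of Lemma~\ref{2lem1} and makes the splicing mechanics transparent; the paper does the same tracing without naming $\tau$ explicitly, works out Cases~$1$ and~$2$ in full, and declares the remaining four analogous, whereas you additionally organise the six configurations by the partition of $\{s_{i-1},s_j,s_l\}$ into $\pi$-cycles and note the symmetries among them.
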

\begin{proof} We shall only prove Case~$1$ and Case~$2$, the remaining four cases
can be shown analogously.
For Case~$1$, the $\pi$-cycles containing $s_{i-1}$,~$s_j$,~$s_l$ are
$$
(s_{i-1},v_1^i,\ldots v_{m_i}^i),  (s_j, v_1^j,\ldots v_{m_j}^j), (s_l,v_1^l,\ldots v_{m_l}^l).
$$
Lemma~\ref{2lem1} allows us to identify the new cycle structure by inspecting the critical
points $s_{i-1}$, $s_j$ and $s_l$.
Here we observe that all three cycles merge and form a single $\pi^h$-cycle
\begin{eqnarray*}
(s_{i-1}, \pi^h(s_{i-1}),(\pi^h)^2(s_{i-1}),\ldots)&=&(s_{i-1}, \pi(s_j), \pi^2(s_j),\ldots )\\
&=& (s_{i-1},v_1^j,\ldots v_{m_j}^j, s_{j}, v_1^l,\ldots v_{m_l}^l, s_l, v_1^i, \ldots v_{m_i}^i).
\end{eqnarray*}
For Case $2$, the $\pi$-cycle containing $s_{i-1}$,~$s_j$,~$s_l$ is
$$
(s_{i-1},v_1^i,\ldots v_{m_i}^i, s_l, v_1^l,\ldots v_{m_l}^l, s_j, v_1^j, \ldots v_{m_j}^j).
$$
We compute the $\pi^h$-cycles containing $s_{i-1}$, $s_j$ and $s_l$ in $\pi^h$ as
\begin{eqnarray*}
(s_{i-1},\pi^h(s_{i-1}),(\pi^h)^2(s_{i-1}),\ldots)&=&(s_{i-1},\pi(s_{j}),
\pi^2(s_{j}),\ldots)=(s_{i-1}, v_1^j,\ldots v_{m_j}^j)\\
(s_j,\pi^h(s_j),(\pi^h)^2(s_j),\ldots)&=&(s_j,\pi(s_{l}),\pi^2(s_{l}),\ldots)=(s_j, v_1^l,\ldots v_{m_l}^l)\\
(s_l,\pi^h(s_l),(\pi^h)^2(s_l),\ldots)&=&(s_l,\pi(s_{i-1}),\pi^2(s_{i-1}),\ldots)=(s_l, v_1^i,\ldots v_{m_i}^i)
\end{eqnarray*}
whence the lemma.

\end{proof}


\begin{definition}
Two plane permutations $(s,\pi)$ and $(s',\pi')$ are equivalent if there exists a
permutation $\alpha$ such that
$$
s=\alpha s' \alpha^{-1}, \quad \pi=\alpha\pi'\alpha^{-1}, \quad \alpha(1)=1.
$$
\end{definition}

For two equivalent plane permutations
$\mathfrak{p}=(s,\pi)$ and $\mathfrak{p}'=(s',\pi')$, we have $s=s'$ if and only if
$\pi=\pi'$. Clearly, the equation $\alpha s'\alpha^{-1}=s=s'$ restricts $\alpha$ to be a shift
within the $n$-cycle $s'$ and the latter has to be trivial due to $\alpha(1)=1$.

Let $U_{D}$ denote the set of plane permutations having
$D$ as diagonals for some fixed permutation $D$ on $[n]$.
Note $\mathfrak{p}=(s,\pi)\in U_D$ iff $D=D_{\mathfrak{p}}=s\circ \pi^{-1}$.
Then, the number $|U_D|$ enumerates the ways to write $D$ as a
product of an $n$-cycle with another permutation. Or equivalently, assuming
$D$ is of cycle-type $\lambda$, in view of
$$
D=s\pi^{-1} \quad \Longleftrightarrow \quad (12\cdots n)=\gamma s\gamma^{-1}=
(\gamma D \gamma^{-1})( \gamma \pi \gamma^{-1}),
$$
where $\gamma$ is unique if $\gamma(1)=1$, $|U_D|$ is also the number
of factorizations of $(12\cdots n)$ into a permutation of cycle-type $\lambda$ and another permutation,
i.e., rooted hypermaps having one face.
A rooted hypermap is a triple of permutations $(\alpha,\beta_1,\beta_2)$,
such that $\alpha=\beta_1\beta_2$. The cycles in $\alpha$ are called faces, the cycles in $\beta_1$
are called (hyper)edges, and the cycles in $\beta_2$ are called vertices. If $\beta_1$ is
an involution without fixed points, the rooted hypermap is an ordinary rooted map.
We refer to~\cite{chap,walsh1,lzv,hz,zv,IJ,jac,pen,reidys,chr-3} and references therein for an
in-depth study of hypermaps and maps.

Let $\mu,\eta$ be partitions of $n$. We write $\mu\rhd_{2i+1}\eta$ if $\mu$
can be obtained by splitting one $\eta$-block into $(2i+1)$ non-zero parts. Let
furthermore $\kappa_{\mu,\eta}$ denote the number of different ways to obtain
$\eta$ from $\mu$ by merging $\ell(\mu)-\ell(\eta)+1$ $\mu$-blocks into one, where
$\ell(\mu)$ and $\ell(\eta)$ denote the number of blocks in the partitions $\mu$ and $\eta$,
respectively.

Let $U_{\lambda}^{\eta}$ denote the set of plane permutations, $\mathfrak{p}=(s,\pi)\in U_D$,
where $D$ has cycle-type $\lambda$ and $\pi$ has cycle-type $\eta$.

\begin{theorem}\cite{chr-1}\label{2thm1}
Let $f_{\eta,\lambda}(n)=\vert U_{\lambda}^{\eta} \vert$.
Then, we have
\begin{equation}
f_{\eta,\lambda}(n)=\frac{\sum_{i=1}^{\lfloor\frac{n-\ell(\eta)}{2}\rfloor}\sum_{\mu\rhd_{2i+1}\eta}
\kappa_{\mu,\eta}f_{\mu,\lambda}(n)
+\sum_{i=1}^{\lfloor\frac{n-\ell(\lambda)}{2}\rfloor}\sum_{\mu\rhd_{2i+1}\lambda}
\kappa_{\mu,\lambda}f_{\mu,\eta}(n)}{n+1-\ell(\eta)-\ell(\lambda)}.
\end{equation}
\end{theorem}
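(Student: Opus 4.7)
My plan is to prove Theorem~\ref{2thm1} by a double-counting argument based on Lemma~\ref{2lem2}, combined with the identification of $U_\lambda^\eta$ with factorizations $(1\,2\,\cdots\,n)=D\pi$ of the long cycle noted just before the theorem. Conjugating any factorization $(A,B)$ by an element that sends $BA$ back to $(1\,2\,\cdots\,n)$ produces an $n$-to-$n$ bijection with factorizations whose two factor-types are swapped, giving $f_{\eta,\lambda}(n)=f_{\lambda,\eta}(n)$ and explaining the symmetric form of the recurrence.

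I would count the set
$$
\mathcal{S}=\{(\mathfrak{p},h):\mathfrak{p}=(s,\pi)\in U_\lambda^\mu,\ \chi_h\text{ acts as a }(2i+1)\text{-to-}1\text{ merger on the }\pi\text{- or }D_{\mathfrak{p}}\text{-cycles for some }i\geq 1,\ \mathfrak{p}^h\in U_\lambda^\eta\}
$$
in two ways. The base case $i=1$ of a $\pi$-merger is precisely Case~1 of Lemma~\ref{2lem2}; the $D$-analogue is obtained from the $\eta\leftrightarrow\lambda$ symmetry above, applied to the diagonal. For $i\geq 2$, one uses general sequences $h$ (beyond $h=(i,j,j+1,l)$), which by the remark preceding Lemma~\ref{2lem2} rearrange the diagonal-pairs of $(s,\pi)$ freely, and extends the six-case analysis to produce arbitrary $(2i+1)$-to-$1$ mergers. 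Counting from the source side: for each $\mathfrak{p}\in U_\lambda^\mu$, the admissible $h$'s number exactly $\kappa_{\mu,\eta}$ ($\pi$-mergers with $\mu\rhd_{2i+1}\eta$) or $\kappa_{\mu,\lambda}$ ($D$-mergers with $\mu\rhd_{2i+1}\lambda$), by the definition of $\kappa$, which yields the numerator $\sum_i\sum_{\mu\rhd_{2i+1}\eta}\kappa_{\mu,\eta}f_{\mu,\lambda}(n)+\sum_i\sum_{\mu\rhd_{2i+1}\lambda}\kappa_{\mu,\lambda}f_{\mu,\eta}(n)$. Counting from the image side: for each $\mathfrak{p}^h\in U_\lambda^\eta$, the total admissible $h$'s (both types combined) equals $2g=n+1-\ell(\eta)-\ell(\lambda)$, as dictated by the hypermap Euler identity $1+\ell(\eta)+\ell(\lambda)=n+2-2g$ for the one-face hypermap $(s^h,\pi^h,D_{\mathfrak{p}^h})$. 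Equating the two counts produces the recurrence.

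The main obstacle is the image-side tally: showing that the admissible $h$'s per plane permutation total exactly $2g$ requires carefully pairing the contributions from Cases~3--6 of Lemma~\ref{2lem2} (which are cycle-preserving but still affect the marked positions and so must cancel through an involution) and matching the net surviving count against the Euler factor. A secondary technical challenge is extending Lemma~\ref{2lem2} from three-cycle to $(2i+1)$-cycle mergers for general $h$, while verifying that the resulting multiplicity equals $\kappa_{\mu,\eta}$ (respectively $\kappa_{\mu,\lambda}$) without over- or under-counting.
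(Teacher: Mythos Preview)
Note first that the paper does not itself prove Theorem~\ref{2thm1}; it is quoted from~\cite{chr-1}. Your overall architecture --- a double count equating $2g\cdot f_{\eta,\lambda}(n)$ with a sum over refinements, together with the symmetry $f_{\eta,\lambda}=f_{\lambda,\eta}$ --- does match the shape of that argument. However, two steps in your plan do not go through as written.

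First, the parametrization by ``admissible $h$'' is inconsistent. If $h$ is a general sequence on $[n-1]$, then $\chi_h$ sweeps out \emph{all} of $U_D$, so the number of $h$ sending a fixed $\mathfrak{p}\in U_\lambda^\mu$ into $U_\lambda^\eta$ is $f_{\eta,\lambda}(n)$ itself, not $\kappa_{\mu,\eta}$; moreover $\chi_h$ fixes $D_\mathfrak{p}$ by construction, so no $h$ ever merges $D$-cycles and the second sum cannot arise this way. If instead $h$ is restricted to block transposes $(i,j,j+1,l)$, then by Lemma~\ref{2lem2} only $3$-to-$1$ mergers (Case~1) occur, never $(2i{+}1)$-to-$1$ for $i\ge 2$. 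The proof in~\cite{chr-1} does not use $\chi_h$ to index the bijection at all: on the refined side one chooses which $2i{+}1$ cycles of $\pi$ (or, passing to the dual plane permutation, of $D$) are to be glued --- this is precisely what $\kappa_{\mu,\eta}$ counts --- and there is a single \emph{canonical} gluing of those cycles, determined by the relative order of their elements along $s$. That is why the source-side multiplicity is exactly $\kappa_{\mu,\eta}$: only the choice of blocks is free, the ``how'' is forced.

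Second, the target-side count of exactly $2g$ marks per element of $U_\lambda^\eta$ is the heart of the proof, and it is not obtained by an involution cancelling Cases~3--6. In~\cite{chr-1} the marks are explicit combinatorial objects in the two-row array of $(s,\pi)$ (certain exceedance/anti-exceedance positions), and a short direct lemma shows there are exactly $n+1-\ell(\eta)-\ell(\lambda)$ of them, split between those whose cut refines $\pi$ and those whose cut refines $D$; the cut is then the inverse of the canonical gluing above. Without identifying these marked positions concretely you cannot establish the image-side multiplicity, and the cancellation you propose on Cases~3--6 has no mechanism that forces the residue to equal $2g$.
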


\begin{corollary}\cite{chr-1}\label{2cor1}
Let $p_{k}^{\lambda}(n)$ denote the number of $\mathfrak{p}\in U_{D}$ having
$k$ cycles, where $D$ is of cycle-type $\lambda$.
\begin{equation}
p_k^{\lambda}(n)=\frac{\sum_{i=1}^{\lfloor\frac{n-k}{2}\rfloor}{k+2i\choose k-1}p_{k+2i}^{\lambda}(n)
+\sum_{i=1}^{\lfloor\frac{n-\ell(\lambda)}{2}\rfloor}\sum_{\mu\rhd_{2i+1}\lambda}
\kappa_{\mu,\lambda}p_k^{\mu}(n)}{n+1-k-\ell(\lambda)}.
\end{equation}
\end{corollary}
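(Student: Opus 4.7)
The plan is to deduce Corollary~\ref{2cor1} directly from Theorem~\ref{2thm1} by summing the identity over all $\eta$ with $\ell(\eta)=k$. Since $p_k^{\lambda}(n)=\sum_{\eta:\,\ell(\eta)=k}f_{\eta,\lambda}(n)$, the left-hand side transforms correctly, and because the denominator $n+1-\ell(\eta)-\ell(\lambda)$ depends on $\eta$ only through $\ell(\eta)=k$, it factors out as $n+1-k-\ell(\lambda)$, matching the denominator in Corollary~\ref{2cor1}. So only the numerator has to be analyzed.

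The second numerator sum is immediate: interchanging the sum over $\eta$ with the sum over $\mu\rhd_{2i+1}\lambda$, the weight $\kappa_{\mu,\lambda}$ does not depend on $\eta$, and $\sum_{\eta:\,\ell(\eta)=k}f_{\mu,\eta}(n)=p_k^{\mu}(n)$. This reproduces exactly the second sum in the statement of Corollary~\ref{2cor1}.

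The first numerator sum is the key step. Here I would swap the summations so that $\mu$ becomes the outer index. If $\mu\rhd_{2i+1}\eta$, then $\ell(\mu)=\ell(\eta)+2i=k+2i$, so the inner sum becomes
\[
\sum_{\mu:\,\ell(\mu)=k+2i}f_{\mu,\lambda}(n)\cdot \Bigl(\sum_{\eta:\,\mu\rhd_{2i+1}\eta,\;\ell(\eta)=k}\kappa_{\mu,\eta}\Bigr).
\]
The main obstacle is thus the combinatorial identity
\[
\sum_{\eta:\,\mu\rhd_{2i+1}\eta,\;\ell(\eta)=k}\kappa_{\mu,\eta}=\binom{k+2i}{k-1}.
\]
The idea is to interpret the left-hand side as follows: by the definition of $\kappa_{\mu,\eta}$, for each $\eta$ it counts the number of ways to choose $\ell(\mu)-\ell(\eta)+1=2i+1$ of the $k+2i$ blocks of $\mu$ whose merger yields the distinguished $\eta$-block. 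Summing over all reachable $\eta$ therefore counts \emph{all} choices of $2i+1$ blocks from the $k+2i$ blocks of $\mu$, i.e.~$\binom{k+2i}{2i+1}=\binom{k+2i}{k-1}$, without any overcounting because distinct subsets of size $2i+1$ produce distinct mergers (each gives one specific partition $\eta$ with $\ell(\eta)=k$, weighted by the correct multiplicity $\kappa_{\mu,\eta}$).

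Plugging this identity back, the first numerator sum becomes $\sum_{i}\binom{k+2i}{k-1}p_{k+2i}^{\lambda}(n)$, where I further use $p_{k+2i}^{\lambda}(n)=\sum_{\mu:\,\ell(\mu)=k+2i}f_{\mu,\lambda}(n)$. The upper summation limit $\lfloor(n-k)/2\rfloor$ simply translates the original bound $\lfloor(n-\ell(\eta))/2\rfloor$ under $\ell(\eta)=k$. Combining the two transformed numerator sums and dividing by $n+1-k-\ell(\lambda)$ yields the formula in Corollary~\ref{2cor1}.
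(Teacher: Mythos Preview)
Your proposal is correct and follows exactly the route implicit in the paper: the corollary is stated (with citation to \cite{chr-1}) immediately after Theorem~\ref{2thm1}, and the intended derivation is precisely to sum the identity of Theorem~\ref{2thm1} over all $\eta$ with $\ell(\eta)=k$, using that the denominator depends on $\eta$ only through $\ell(\eta)$ and that $\sum_{\eta:\,\mu\rhd_{2i+1}\eta}\kappa_{\mu,\eta}=\binom{\ell(\mu)}{2i+1}$. Your treatment of both numerator sums and the binomial identity is the expected one.
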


\begin{proposition}\cite{chr-1,zag}\label{2pro1}
\begin{align}
\arg\max_k\{p_k^{\lambda}(n)\neq 0\}=n+1-\ell (\lambda).
\end{align}
\end{proposition}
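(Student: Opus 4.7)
The plan is to combine an upper bound on $k$ coming from a classical cycle-count inequality with an explicit construction that realizes the bound.

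For the upper bound, recall that for any $\sigma\in\mathcal{S}_n$ the minimal number of transpositions needed to express $\sigma$ equals $n-c(\sigma)$, where $c(\sigma)$ denotes the number of cycles of $\sigma$. Since left-multiplying by a single transposition always changes the cycle count by exactly $\pm 1$ (it either merges two cycles into one or splits one cycle into two), this transposition length is subadditive, which rewrites as
\[
c(\sigma)+c(\tau)\le n+c(\sigma\tau).
\]
Applied to $\sigma=D$, $\tau=\pi$ with $D\pi=s$, where $s$ is an $n$-cycle (so $c(s)=1$) and $D$ has cycle type $\lambda$, this yields $\ell(\lambda)+k\le n+1$. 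Hence $p_k^{\lambda}(n)=0$ whenever $k>n+1-\ell(\lambda)$. Equivalently, this is the non-negativity of the hypermap genus, using the correspondence between plane permutations in $U_D$ and rooted one-face hypermaps $(s,D,\pi)$ with $s=D\pi$ spelled out just before the proposition.

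For achievability I would exhibit a plane permutation $(s,\pi)$ attaining $k=n+1-\ell(\lambda)$. Set $s=(1,2,\ldots,n)$, write $\lambda=(\lambda_1,\ldots,\lambda_{\ell(\lambda)})$, let $b_i=\lambda_1+\cdots+\lambda_i$, and let $D$ be the permutation whose cycles are the consecutive blocks $(b_{i-1}+1,b_{i-1}+2,\ldots,b_i)$ for $i=1,\ldots,\ell(\lambda)$. Then $D$ has cycle type $\lambda$. A direct computation of $\pi=D^{-1}s$ shows that $\pi(j)=j$ whenever $j\notin\{b_1,\ldots,b_{\ell(\lambda)}\}$, while $\pi(b_i)=b_{i+1}$ for $1\le i<\ell(\lambda)$ and $\pi(b_{\ell(\lambda)})=b_1$. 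Thus $\pi$ consists of $n-\ell(\lambda)$ fixed points together with a single $\ell(\lambda)$-cycle on the block endpoints, giving $c(\pi)=n+1-\ell(\lambda)$, as required.

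The main obstacle is the cycle-count inequality, but the transposition-length proof sketched above (or equivalently the standard Euler-characteristic argument for hypermaps) is short and classical; the construction is then a routine verification. Combining the two yields $p_{n+1-\ell(\lambda)}^{\lambda}(n)\ge 1$ while $p_k^{\lambda}(n)=0$ for every larger $k$, which is exactly the proposition.
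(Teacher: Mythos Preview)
Your argument is correct. Note that the paper does not actually supply its own proof of this proposition; it is quoted as a known result with external citations, so there is nothing in the text to compare your approach against. Your two ingredients are the standard ones: the upper bound $k\le n+1-\ell(\lambda)$ follows from subadditivity of transposition length (equivalently, from $2g\ge 0$ in the Euler relation for the one-face hypermap $(s,D,\pi)$), and your explicit block construction with $s=(1,2,\ldots,n)$ and $D$ acting on consecutive intervals of sizes $\lambda_1,\ldots,\lambda_{\ell(\lambda)}$ cleanly realises the bound, since $\pi=D^{-1}s$ then fixes every non-endpoint and cyclically permutes the block endpoints $b_1,\ldots,b_{\ell(\lambda)}$. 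Both steps are complete as written.
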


\section{Local variation of embeddings with one face}

As already mentioned, an embedding of the given graph $G$ can be combinatorially encoded into
$G$ with a specified cyclic order of edges incident to each vertex of $G$.
Such a cyclic ordering is also called a rotation system.
A graph with a rotation system on it is called a fatgraph.

Conventionally, a fatgraph of $n$ edges is encoded into a triple of permutations $(\alpha,\beta,\gamma)$ on $[2n]$.
This can be obtained as follows: given a fatgraph $F$, we firstly call the two ends of an edge as two half edges.
Label all half edges using the labels from the set $[2n]$ so that each label appears exactly once.
Then we immediately obtain two permutations $\alpha$ and $\beta$
where $\alpha$ is an involution without fixed points and each cycle consisting of the labels of
the two half edges of a same edge and each cycle in $\beta$ is the counterclockwise cyclic arrangement
of all half edges incident to a same vertex. The third permutation $\gamma=\alpha\beta$, which can be
interpreted as the set of counterclockwise boundaries of the fatgraph.
A boundary of the fatgraph is obtained as follows:
start from some half edge, and every time when we meet a half edge we next go to the half edge paired with the
counterclockwise neighbor of the present half edge until we meet the starting half edge again, the obtained cycle
is a boundary of the fatgraph which corresponds to a cycle in $\gamma$.
Starting from one half edge which does not appear in the former obtained
boundary (or boundaries) and continuing the traveling process, we obtain all the boundaries of the fatgraph.
If $\gamma$ has $k$ cycles, the fatgraph has $k$ boundaries, i.e., the corresponding embedding has $k$ faces.
Obviously, a different triple of permutations can be obtained by relabeling the half edges of the fatgraph.

For a given fatgraph $(\alpha, \beta, \gamma)$, it is well known that
$(\alpha, \alpha\beta, \alpha\gamma)=(\alpha, \gamma, \beta)$ is its Poincar\'{e} dual
which transforms a face into a vertex and vice versa.

In this section, we will focus on embeddings with one face, i.e., in the triple
$(\alpha,\beta, \gamma)$, $\gamma$ has only one cycle.
These maps are also called unicellular maps~\cite{chap,reidys}.
At this point, we observe:\\
{\bf Observation:} A unicellular maps $(\alpha,\beta,\gamma)$ can be encoded into a plane permutation $(s,\pi)$
as well, i.e., $s=\gamma$ and $\pi=\beta$.\\
See Figure~\ref{fig-fig1} for an example of a fatgraph
with one boundary. The two drawings there are the same unicellular map.
However, in the drawing on the righthand side the edges are drawn as ribbons so that
it is sometimes called ribbon graphs.
The corresponding plane permutation is
\begin{eqnarray*}
\mathfrak{p}=\left(\begin{array}{cccccccc}
1&2&3&4&5&6&7&8\\
1&6&7&8&3&4&5&2
\end{array}\right).
\end{eqnarray*}
These objects have been considered in many different contexts, e.g., the computation of
matrix integral \cite{zv}, moduli space of curves \cite{hz}, factorization of permutations \cite{IJ,ag,jac},
topological RNA and protein structure
\cite{reidys,pen}, etc.

\begin{figure}[!htb]
\centering
\includegraphics[scale=.6]{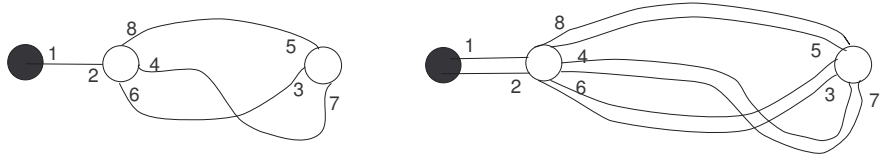}
{\centering \caption{A unicellular map with $4$ edges.}\label{fig-fig1}}

\end{figure}

In the following, we study embeddings (fatgraphs) in the framework of plane permutations.
Let $\mathfrak{p}=(s,\pi)$ encode a one-face embedding of the graph $G$.
The elements in the set on which the plane permutation is defined are
called half edges. A cycle of $\pi$ is also called a vertex and
$s$ is also called the boundary (i.e., face).
The genus of the plane permutation refers to the genus of the corresponding unicellular maps.
We focus on the local structure of unicellular maps,
which is motivated by Case~$3$ transpose in Lemma~\ref{2lem2} as follows:
given a plane permutation, if we apply a Case~$3$ transpose,
the set of half edges in each cycle of $\pi$ is not changed, which means the underlying graphs before and after the
transpose are the same. Namely, we may obtain different unicellular maps by rearranging the half edges around the vertices of a
given unicellular map. Thus we could ask,
given a unicellular map, if we randomly rearrange half edges around a vertex, what is the probability of
the obtained fatgraph is still a unicellular map?
For a given vertex in a unicellular map, whether it is the unique way of arranging all edges incident to the
vertex to achieve the genus in the present map? and so on.

\subsection{Variation of the embedding around one vertex}

At first, we consider the case where only half edges around one vertex are rearranged.
Let $\mathfrak{p}=(s,\pi)$ correspond to a one-face embedding of the graph $G$.
A vertex $v$ of $G$ can be represented as a cycle of $\pi$, which can be also naturally encoded into
a plane permutation
\begin{eqnarray*}
v=(s_v,\pi_v)=\left(\begin{array}{cccccccc}
s_{i_0}&s_{i_1}&s_{i_2}&s_{i_3}&\cdots&s_{i_{k-1}}\\
\pi(s_{i_0})&\pi(s_{i_1})&\pi(s_{i_2})&\pi(s_{i_3})&\cdots&\pi(s_{i_{k-1}})
\end{array}\right),
\end{eqnarray*}
where $s_v$ is a subsequence of $s$ consisting of the half edges around $v$ and $\pi_v$ is equal to $\pi$
with restriction to the half edges around $v$, i.e., the set $H(v)$.

Let $X$ denote the set of $k$-cycles $\theta$ on $H(v)$ such that the resulting embedding
has one face after rearranging half edges around $v$ according to $\theta$, $Y$
denote the set of sequences $h$ on $[k-1]$ such that $\chi_h \circ (s_v, \pi_v)$ has only one cycle.

\begin{theorem}\label{3thm1}
$|X|=|Y|$.
\end{theorem}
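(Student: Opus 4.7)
The plan is to reduce both counts to the same enumeration problem. Specifically, I will show that $X$ and $Y$ are both in natural bijection with
\[
\mathcal{C}=\{\tau:\tau\text{ is a }k\text{-cycle on }H(v)\text{ and }D_v\tau\text{ is a }k\text{-cycle on }H(v)\},
\]
where $D_v=s_v\pi_v^{-1}$ is the diagonal of the local plane permutation $(s_v,\pi_v)$. The bijection $Y\to\mathcal{C}$ is immediate from the definitions: by the formula $\pi_v^h=D_vs_v^h$, the condition $h\in Y$ says exactly that $D_vs_v^h$ is a single $k$-cycle, and the map $h\mapsto s_v^h$ is a bijection from permutations $h$ of $[k-1]$ onto the set of $k$-cycles on $H(v)$ (each such cycle having a unique representation starting with $s_{i_0}$).

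The substantive step is the characterization $X=\mathcal{C}$. Replacing the $\pi$-cycle at $v$ by a $k$-cycle $\theta$ on $H(v)$ produces a new global permutation $\pi'$ that agrees with $\pi$ off $H(v)$. Since the edge-involution $D=\alpha$ is not altered by the re-embedding, the new boundary permutation is $s'=D\pi'=s\sigma$, where $\sigma=\pi^{-1}\pi'$ is supported on $H(v)$ with $\sigma|_{H(v)}=\pi_v^{-1}\theta$. I will then compute the first-return map $\phi':H(v)\to H(v)$ of the $s'$-orbit to $H(v)$: starting at $x\in H(v)$, one has $s'(x)=s(y)$ with $y=\pi_v^{-1}\theta(x)\in H(v)$, and because $\sigma$ acts trivially outside $H(v)$, iterating $s'$ from $x$ coincides with iterating $s$ from $y$ until $H(v)$ is first reached again; by the definition of $s_v$ as the first-return map of $s$ to $H(v)$ this yields $\phi'(x)=s_v(y)=D_v\theta(x)$, so $\phi'=D_v\theta$.

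It remains to verify that $s'$ is a single $n$-cycle on $[n]$ if and only if $\phi'$ is a single $k$-cycle on $H(v)$. The ``only if'' direction is clear. For the converse, the $s$-orbit decomposes $[n]\setminus H(v)$ into the ``excursions'' attached to each $y\in H(v)$, namely the elements strictly between $y$ and $s_v(y)$ in the cycle $s$; the $s'$-excursion leaving $x\in H(v)$ reuses precisely the $s$-excursion attached to $\pi_v^{-1}\theta(x)$, and as $\phi'^{\,i}(x)$ cycles through $H(v)$ the bijectivity of $\pi_v^{-1}\theta$ on $H(v)$ ensures that every such excursion appears exactly once. Hence transitivity of $\phi'$ on $H(v)$ forces transitivity of $s'$ on $[n]$, so $X=\mathcal{C}$ and thus $|X|=|Y|$. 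The main obstacle is the first-return map computation together with the excursion-reuse argument; once these are verified, the rest reduces to tracking the natural indexing between sequences $h$ and $k$-cycles on $H(v)$ starting with $s_{i_0}$.
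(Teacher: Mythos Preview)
Your argument is correct. The key computation---that the first-return map of $s'=s(\pi_v^{-1}\theta)$ to $H(v)$ equals $D_v\theta$---is exactly right, and the excursion-reuse argument cleanly gives the equivalence between transitivity of $s'$ on $[n]$ and transitivity of $D_v\theta$ on $H(v)$.

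The underlying combinatorics coincides with the paper's: your ``excursions'' are precisely the paper's diagonal blocks $[s_{i_t+1},s_{i_{t+1}}]$, and your first-return identity $\phi'(x)=s_v(\pi_v^{-1}\theta(x))$ is the content of the paper's Claim that each diagonal block of $(s',\pi')$ equals some diagonal block of $(s,\pi)$. The packaging differs, however. The paper builds an explicit bijection $Y\leftrightarrow X$, sending $h$ to the rearranged vertex cycle $\theta=\pi_v^h$, by physically moving the global diagonal blocks. You instead introduce the intermediate set $\mathcal{C}=\{\tau:D_v\tau\text{ is a }k\text{-cycle}\}$, identify $X$ with $\mathcal{C}$ as a \emph{set} via the first-return characterization, and count $Y$ against $\mathcal{C}$ via $h\mapsto s_v^h$. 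This yields a different (but equally valid) correspondence $Y\to X$, namely $h\mapsto s_v^h$ rather than $h\mapsto \pi_v^h$. Your route has the advantage of isolating the intrinsic description of $X$ (which is exactly what Corollary~\ref{3cor1} then exploits), at the cost of not exhibiting the paper's more ``geometric'' block-shuffling picture.
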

\begin{proof}
Let
\begin{eqnarray*}
\mathfrak{p}=\left(
\vcenter{\xymatrix@C=0pc@R=1pc{
\cdots s_{i_0} & s_{i_0+1} &{\cdots} & s_{i_1}\ar@{--}[dl] &{\quad\cdots} & s_{i_2}\ar@{-}[dl]& \cdots & s_{i_{k-2}} &{\cdots} & s_{i_{k-1}} \ar@{--}[dl] \cdots\\
\cdots \pi(s_{i_0})\ar@{--}[ur] & {\cdots} & \pi(s_{i_1-1}) & \pi(s_{i_1})\ar@{-}[ur] & {\cdots}  & \pi(s_{i_2}) & \cdots & \pi(s_{i_{k-2}})\ar@{--}[ur] & {\cdots} & \pi(s_{i_{k-1}}) \cdots
}}
\right).
\end{eqnarray*}
$Y \rightarrow X$: each $h=h_1h_2\cdots h_{k-1}\in Y$ uniquely induces a rearrangement of
the following diagonal blocks of $\mathfrak{p}$:
  $$
  [s_{i_0+1},s_{i_1}],\quad [s_{i_1+1},s_{i_2}],\quad \ldots \quad [s_{i_{k-2}+1},s_{i_{k-1}}],
  $$
where each segment (i.e., interval), e.g., $[s_{i_0+1},s_{i_1}]$, refers to the diagonal
block with the segment as the top row (or called top boundary).
The resulting plane permutation is still an embedding with one face.
Note in this operation, similar to transposes in Lemma~\ref{2lem2},
 we only change the image of the elements in the set $H(v)$ into
the elements in $H(v)$, all other cycles of $\mathfrak{p}$ are not changed.
If after the rearrangement of the diagonal blocks according to $h$,
the permutation on $H(v)$ forms only one cycle $\theta$,
then the resulting unicellular map has the same underlying graph $G$.
Namely, the resulting embedding with one face is obtained by rearranging the half edges around $v$
according to $\theta$. By construction, we have
$$
\theta(s_{i_0})=\pi(s_{i_{h_1-1}}),\ldots \theta(s_{i_{h_j}})=\pi(s_{i_{h_{j+1}-1}}),
\ldots \theta(s_{i_{h_{k-1}}})=\pi(s_{i_{k-1}}).
$$
\\
$X \rightarrow Y$: given $\theta \in X$, since the resulting embedding after rearrangement according to $\theta$
is still one-face embedding, then the corresponding plane permutation $(s', \pi')$ must have the form
\begin{eqnarray*}
\left(
\vcenter{\xymatrix@C=0pc@R=1pc{
\cdots s'_{i_0} & s'_{i_0+1} &{\cdots} & s'_{i_1}\ar@{--}[dl] &{\quad\cdots} & s'_{i_2}\ar@{-}[dl]& \cdots & s'_{i_{k-2}} &{\cdots} & s'_{i_{k-1}}\quad \ar@{--}[dl] \cdots\\
\cdots \pi'(s'_{i_0})\ar@{--}[ur] & {\cdots} & \pi'(s'_{i_1-1}) & \pi'(s'_{i_1})\ar@{-}[ur] & {\cdots}  & \pi'(s'_{i_2}) & \cdots & \pi'(s'_{i_{k-2}})\ar@{--}[ur] & {\quad \cdots \quad} & \pi'(s'_{i_{k-1}}) \cdots
}}
\right)
\end{eqnarray*}
where we assume $s'_0=s_0$. Since by construction the local structures are not changed except for
around $v$, $s_j=s'_j$ for $0 \leq j \leq i_0$. Assume
$$
H(v)=\{s'_{i_0},s'_{i_1},\ldots s'_{i_{k-1}}\}=\{\pi'(s'_{i_0}),\pi'(s'_{i_1}),\ldots \pi'(s'_{i_{k-1}})\}.
$$
Then, we have $\pi'(s'_{i_j})=\theta(s'_{i_j})$ for $0\leq j \leq k-1$.
It suffices to show that each diagonal block $[s'_{i_j+1},s'_{i_{j+1}}]$ for some $j$ is the same as
the diagonal block $[s_{i_l+1},s_{i_{l+1}}]$ for some $l$, i.e.,
\begin{eqnarray*}
\left(
\vcenter{\xymatrix@C=0pc@R=1pc{
s'_{i_j} & s'_{i_j+1}\ar@{--}[dl] & s'_{i_j+2} &\cdots & s'_{i_{j+1}}\ar@{--}[dl]\\
\pi'(s'_{i_j}) & \pi'(s'_{i_j+1})  &{\cdots} & \pi'(s'_{i_{j+1}-1}) &\pi'(s'_{i_{j+1}})
}}
\right)=
\left(
\vcenter{\xymatrix@C=0pc@R=1pc{
s_{i_l} & s_{i_l+1}\ar@{--}[dl] & s_{i_l+2} &\cdots & s_{i_{l+1}}\ar@{--}[dl]\\
\pi(s_{i_l}) & \pi(s_{i_l+1})  &{\cdots} & \pi(s_{i_{l+1}-1}) &\pi(s_{i_{l+1}})
}}
\right)
\end{eqnarray*}
{\bf Claim.} If $\pi'(s'_{i_j})=\pi(s_{i_l})$, the diagonal block $[s'_{i_j+1}, s'_{i_{j+1}}]$
and the diagonal block $[ s_{i_l+1}, s_{i_{l+1}}]$ are equal.\\
Note, if $\pi'(s'_{i_j})=\pi(s_{i_l})$, then
$$
s'_{i_j+1}=D_{\mathfrak{p}}\circ \pi'(s'_{i_j})=D_{\mathfrak{p}}\circ \pi(s_{i_l})=s_{i_l+1}.
$$
Since $s'_{i_j+1}$ is not in $H(v)$, $\pi'(s'_{i_j+1})=\pi(s'_{i_j+1})=\pi(s_{i_l+1})$.
Continuing the analysis, we have
$$
s'_{i_j+2}=D_{\mathfrak{p}}\circ \pi'(s'_{i_j+1})=D_{\mathfrak{p}}\circ \pi(s_{i_l+1})=s_{i_l+2},
$$
and so on, finally we come to $s'_{i_{j+1}}=s_{i_{l+1}}$. This affirms the claim.\\
Therefore, the sequence of the diagonal blocks $[s'_{i_j+1}, s'_{i_{j+1}}]$ are rearrangement of
the sequence of the diagonal blocks $[s_{i_l+1}, s_{i_{l+1}}]$. It is obvious that each
rearrangement of the diagonal blocks $[s_{i_l+1}, s_{i_{l+1}}]$ uniquely induces a rearrangement
of the diagonal-pairs of $(s_v,\pi_v)$ according to $h\in [k-1]$. This completes the proof.
\end{proof}

From the discussion above, we observe that given any vertex $v$, the half edges in $H(v)$ will
segment the plane permutation into $|H(v)|$ diagonal blocks. Each diagonal block
is completely determined by its left lower corner $\pi(s_{i_t})$ and its right upper
corner $s_{i_{t+1}}=s_v(s_{i_t})$. We can view these diagonal
blocks as arranged in a circular manner, as shown in Figure~\ref{fig-circ}.
To rearrange the half edges around $v$ is to rearrange these diagonal blocks circularly.

\begin{figure}[!htb]
\centering
\includegraphics[scale=.6]{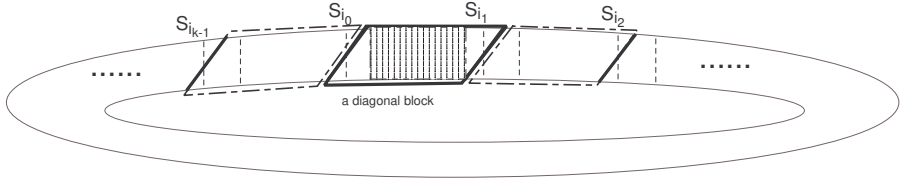}
{\centering \caption{Circular arrangement of diagonal blocks determined by the vertex $v$}\label{fig-circ}}

\end{figure}

Given a plane permutation $\mathfrak{p}=(s,\pi)$ with only one cycle, how many different $h$ such that
$\chi_h\circ (s,\pi)$ has only one cycle?
From the discussion in Section~$2$, we know that it is equivalent to factorizing $D_v$.
Let $R_v=|X|=|Y|$. Then we have

\begin{corollary}\label{3cor1}
Let $\mathfrak{p}=(s,\pi)$ correspond to a one-face embedding of the graph $G$,
$v$ is a vertex of $G$ and
\begin{eqnarray*}
v=(s_v,\pi_v)=\left(\begin{array}{cccccccc}
s_{i_0}&s_{i_1}&s_{i_2}&s_{i_3}&\cdots&s_{i_{k-1}}\\
\pi(s_{i_0})&\pi(s_{i_1})&\pi(s_{i_2})&\pi(s_{i_3})&\cdots&\pi(s_{i_{k-1}})
\end{array}\right).
\end{eqnarray*}
Assume $D_v=s_v\circ \pi_v^{-1}$ is of cycle-type $\lambda$. Then, we have
\begin{align}\label{3eq5}
R_v=p_1^{\lambda}(k).
\end{align}
Furthermore, if $\lambda=(1^{a_1},2^{a_2},\ldots,k^{a_k})$, then
\begin{equation}\label{3eq6}
R_v=\sum_{i=0}^{k-1}\frac{i!(k-1-i)!}{k}\sum_{<r_1,\ldots,r_i>}{a_1-1\choose r_1}{a_2\choose r_2}\cdots{a_i\choose r_i}(-1)^{r_2+r_4+r_6+\cdots},
\end{equation}
where $<r_1,\ldots,r_i>$ ranges over all non-negative integer solutions to the equation $\sum_j jr_j=i$.
\end{corollary}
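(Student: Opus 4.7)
The plan splits naturally into proving (3.5) first and then deriving (3.6) via character theory. For (3.5), Theorem~\ref{3thm1} already asserts $R_v=|X|=|Y|$, so it suffices to identify $|Y|$ with $p_1^\lambda(k)$. The key observation is that the rearrangement $\chi_h$ preserves the diagonal: each $(s_v^h,\pi_v^h)$ lies in $U_{D_v}$, so $Y$ corresponds bijectively to the set of plane permutations on $H(v)$ whose diagonal equals $D_v$ (of cycle type $\lambda$) and whose $\pi$-component is a single $k$-cycle. Relabeling $H(v)\to[k]$ to match the convention $s_0=1$ of Section~2 gives exactly $p_1^\lambda(k)$ as defined in Corollary~\ref{2cor1}.

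For (3.6), I would unfold $p_1^\lambda(k)$ as the number of ordered pairs $(s,\pi)$ of $k$-cycles in $\mathcal{S}_k$ satisfying $s\pi^{-1}=D$ for any fixed $D$ of cycle type $\lambda$, i.e., a factorization count. Frobenius's class-sum formula then gives
\begin{equation*}
p_1^\lambda(k)=\frac{((k-1)!)^2}{k!}\sum_{\nu\vdash k}\frac{\chi^{\nu}(k\text{-cyc})^{2}\,\chi^{\nu}(\lambda)}{\chi^{\nu}(1)}.
\end{equation*}
The Murnaghan--Nakayama rule applied to a single $k$-cycle forces $\chi^{\nu}(k\text{-cyc})$ to vanish unless $\nu=(k-r,1^{r})$ is a hook, in which case it equals $(-1)^{r}$ and $\chi^{\nu}(1)=\binom{k-1}{r}$. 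Substitution yields
\begin{equation*}
p_1^\lambda(k)=\sum_{r=0}^{k-1}\frac{r!(k-1-r)!}{k}\,\chi^{(k-r,1^{r})}(\lambda),
\end{equation*}
which already produces the outer prefactor of (3.6).

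It remains to identify $\chi^{(k-r,1^{r})}(\lambda)$ with the inner sum, equivalently to prove the generating-function identity
\begin{equation*}
\sum_{r=0}^{k-1}\chi^{(k-r,1^{r})}(\lambda)\,t^{r}=(1+t)^{a_1-1}\prod_{j\geq 2}\bigl(1+(-1)^{j-1}t^{j}\bigr)^{a_j}.
\end{equation*}
For this I would exploit the Jacobi--Trudi expansion $s_{(k-r,1^{r})}=\sum_{j=0}^{r}(-1)^{j}e_{r-j}h_{k-r+j}$, which telescopes to the symmetric-function identity $(1+t)\sum_{r}s_{(k-r,1^{r})}t^{r}=\sum_{i}e_{i}h_{k-i}t^{i}$. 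Pairing with $p_\lambda$ in the Hall inner product and expanding each $e_i,h_{k-i}$ in the power-sum basis via the standard coefficients $\epsilon_\mu/z_\mu$ and $1/z_\mu$ (with $\epsilon_\mu=\prod_{j\text{ even}}(-1)^{b_j}$ for $\mu=(1^{b_1},2^{b_2},\ldots)$), the right side factors as $\prod_{j\text{ odd}}(1+t^{j})^{a_j}\prod_{j\text{ even}}(1-t^{j})^{a_j}$; dividing by $(1+t)$ and extracting the coefficient of $t^{r}$ yields exactly the desired inner sum.

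The main technical obstacle lies in this last step: the telltale exponent $a_1-1$ (one less than the number of fixed points of $\lambda$) reflects the cancellation of a single $(1+t)$ factor coming from the Jacobi--Trudi rearrangement against one of the $a_1$ factors produced by the $1$-cycles of $\lambda$. Correctly tracking the signs across the $e$- and $h$-expansions, and verifying the degenerate cases ($a_1=0$, for which the convention $\binom{-1}{r_1}=(-1)^{r_1}$ supplies the missing sign, and $\lambda=(1^{k})$, where the formula must collapse via $\sum_{r}\binom{k-1}{r}\frac{r!(k-1-r)!}{k}=(k-1)!$ to the trivial count of $k$-cycles), constitutes the delicate bookkeeping required.
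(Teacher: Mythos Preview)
Your argument for \eqref{3eq5} is exactly the paper's: Theorem~\ref{3thm1} gives $R_v=|Y|$, and since each $\chi_h$ preserves the diagonal $D_v$, the set $Y$ is in bijection with plane permutations in $U_{D_v}$ whose $\pi$-part is a single $k$-cycle, i.e.\ with factorizations of $D_v$ into two $k$-cycles; this count is $p_1^{\lambda}(k)$ by definition.

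For \eqref{3eq6} the two presentations diverge. The paper simply invokes Stanley~\cite{stan} as a black box, whereas you reconstruct the formula from scratch via the Frobenius class-sum identity, the Murnaghan--Nakayama vanishing that restricts the sum to hook shapes, and the hook-character generating function
\[
\sum_{r=0}^{k-1}\chi^{(k-r,1^{r})}(\lambda)\,t^{r}=(1+t)^{a_1-1}\prod_{j\geq 2}\bigl(1-(-t)^{j}\bigr)^{a_j}.
\]
This is precisely the route Stanley himself takes, so your derivation is correct and in effect unpacks the citation. The benefit of your write-up is self-containment and an explanation of the otherwise mysterious $a_1-1$ exponent (the cancellation of one $(1+t)$ factor); the paper's version is terser but relies entirely on the external reference. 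Your handling of the boundary cases ($a_1=0$ via $\binom{-1}{r_1}=(-1)^{r_1}$, and $\lambda=1^{k}$ collapsing to $(k-1)!$) is also correct and worth keeping.
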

\proof The number $R_v$ is equal to the number of different ways to
factorize $D_v$ into a permutation with one cycle (e.g., $s_v$) and
the other permutation with one cycle (e.g., $\pi_v^{-1}$).
Then, Eq.~\eqref{3eq5} follows from Corollary~\ref{2cor1}.
The explicit formula Eq.~\eqref{3eq6} follows from Stanley~\cite{stan}.
This completes the proof. \qed

\begin{example}
Given a plane permutation
\begin{eqnarray*}
\left(\begin{array}{cccccccccccccccccccc}
1&2&\cdots&7&8&9&10&11&12&13&14&15&16&17&18&19&20\\
1&7&\cdots&17&14&5&20&16&6&9&19&12&8&4&15&11&2
\end{array}\right),
\end{eqnarray*}
the corresponding unicellular map of which is shown on the left in Figure~\ref{fig-fig2}.
Consider the vertex
$
v=\Big(\begin{array}{ccccc}
8&11&14&16&19\\
14&16&19&8&11
\end{array}\Big),
$

\begin{eqnarray*}
D_v&=&\left(\begin{array}{ccccc}
8&19&16&14&11
\end{array}\right)\\
&=&\left(\begin{array}{ccccc}
8&11&14&16&19
\end{array}\right)\left(\begin{array}{ccccc}
8&16&11&19&14
\end{array}\right)\\
&=&\left(\begin{array}{ccccc}
8&14&19&11&16
\end{array}\right)\left(\begin{array}{ccccc}
8&14&19&11&16
\end{array}\right).
\end{eqnarray*}
Rearranging the half edges around the vertex $v$ following the second factorization of $D_v$, we
obtain another unicellular map as shown on the right hand side in Figure~\ref{fig-fig2}.

\begin{figure}[!htb]
\centering
\includegraphics[width=0.75\textwidth]{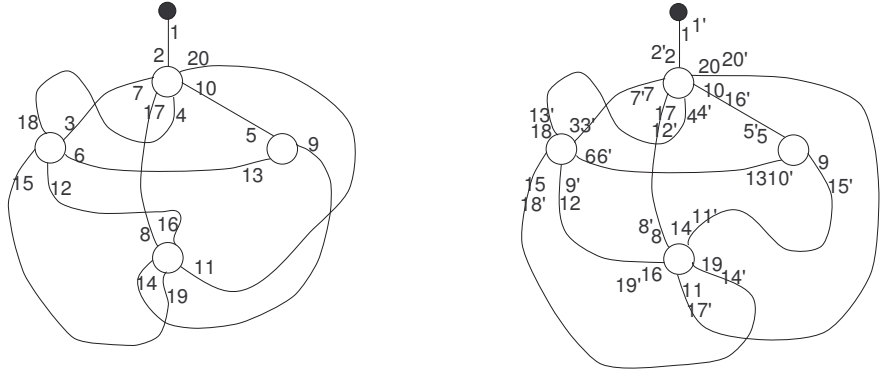}
\caption{A unicellular map with $10$ edges (left) and rearranging half
 edges around one of its vertices (right) where after relabeling
the boundary is $(1',2',\ldots,20')$.}\label{fig-fig2}

\end{figure}
\end{example}

We can see that given a one-face embedding of the graph $G$, randomly rearranging the half edges around the vertex
$v$, the probability of the resulting map to be unicellular is exactly $\frac{R_v}{(|H(v)|-1)!}$.
Furthermore, we have
\begin{theorem}
Let $\mathbb{E}$ be a one-face embedding of $G$, and $v$ is a vertex of $G$ with degree $deg(v)=k$.
Assume $v=(s_v,\pi_v)$ and $D_v$ is of cycle-type $\lambda=(1^{a_1},2^{a_2},\ldots,k^{a_k})$.
Then, the probability $prob_1(v)$ of the resulting embedding to be unicellular after rearranging the half edges
around $v$ satisfies
\begin{align}
\frac{2}{deg(v)-a_1+2} \leq prob_1(v) \leq \frac{2}{deg(v)-a_1+\frac{19}{29}}.
\end{align}
In particular, for any vertex $v$, $prob_1(v)\geq \frac{2}{deg(v)+2}$.
\end{theorem}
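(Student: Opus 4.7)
My plan is to reduce $prob_1(v)$ to a single integral and then bound that integral. Start with Theorem~\ref{3thm1} and Corollary~\ref{3cor1}, which give
$$
prob_1(v)=\frac{R_v}{(deg(v)-1)!}=\frac{p_1^{\lambda}(k)}{(k-1)!},\qquad k:=deg(v),
$$
where $\lambda=(1^{a_1},2^{a_2},\ldots,k^{a_k})$ is the cycle type of $D_v$. Substituting the explicit formula~\eqref{3eq6} into the Beta identity $\frac{i!(k-1-i)!}{k!}=\int_0^1 t^i(1-t)^{k-1-i}dt$ and setting $u=t/(1-t)$ collapses the double sum into the compact representation
$$
prob_1(v)=\int_0^\infty \frac{\prod_{j\geq 2}(1+\epsilon_j u^j)^{a_j}}{(1+u)^{m+2}}\,du,\qquad m:=deg(v)-a_1,\quad \epsilon_j:=(-1)^{j-1},
$$
so that only the non-fixed cycles of $D_v$ enter the numerator. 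The involution $u\mapsto 1/u$ multiplies the integrand by $\delta:=\prod_{j\text{ even}}(-1)^{a_j}$, forcing $prob_1(v)=0$ unless $\sum_{j\text{ even}}a_j$ is even (the integer-genus condition); when it holds, the same involution yields
$$
prob_1(v)=2\int_0^1 \frac{\prod_{j\geq 2}(1+\epsilon_j u^j)^{a_j}}{(1+u)^{m+2}}\,du.
$$

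For the lower bound I would use the factorizations $1+u^{2r+1}=(1+u)(1-u+\cdots+u^{2r})$ and $1-u^{2r}=(1-u)(1+u+\cdots+u^{2r-1})$ to pull explicit powers of $(1\pm u)$ out of the numerator. What remains after cancellation is a polynomial with value $1$ at $u=0$ and (whenever some $a_{2r}>0$) vanishing of order $\geq 1$ at $u=1$; a direct estimate on $[0,1]$ yields $\int_0^1(\cdots)du\geq \frac{1}{m+2}$ and hence $prob_1(v)\geq \frac{2}{m+2}=\frac{2}{deg(v)-a_1+2}$. Equality is attained for $\lambda=(2^{2\ell},1^{\ast})$: a direct computation via the substitution $w=(1-u)/(1+u)$ gives $prob_1(v)=\frac{1}{2\ell+1}=\frac{2}{m+2}$. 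The consequence $prob_1(v)\geq \frac{2}{deg(v)+2}$ is immediate because $a_1\geq 0$ and $x\mapsto 2/x$ is decreasing.

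The upper bound is harder because the constant $\tfrac{19}{29}$ is the worst-case value over realizable $\lambda$. Explicit evaluation shows it is achieved at $\lambda=(3^{3},1^{\ast})$, where $\int_0^\infty \frac{(1-u+u^2)^3}{(1+u)^{8}}du=\frac{29}{140}=\frac{2}{9+19/29}$. The strategy is then to show that no other realizable cycle type does worse. I would argue by case analysis that replacing an odd part $2r+1\geq 5$ by shorter odd parts, or inserting an even part of length $\geq 2$, strictly decreases $2/prob_1(v)-m$, thereby reducing the problem to the family $(3^{s},1^{\ast})$, on which a direct computation shows $s=3$ attains the minimum. The main obstacle is precisely this monotonicity step: it seems to require a sharp pointwise inequality for the polynomial $\prod_{j\geq 2}(1+\epsilon_j u^j)^{a_j}/(1+u)^{m}$ on $[0,1]$, and handling odd and even parts together (since they contribute with opposite signs in the numerator) is the delicate point.
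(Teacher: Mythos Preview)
The paper's proof is a one-line citation: it quotes Zagier's inequality
\[
\frac{2(k-1)!}{k-a_1+2}\ \le\ p_1^{\lambda}(k)\ \le\ \frac{2(k-1)!}{k-a_1+\tfrac{19}{29}}
\]
as a black box and divides by $(k-1)!$. Nothing more is needed once Corollary~\ref{3cor1} is in hand.

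You take a genuinely different route: you try to \emph{reprove} Zagier's bounds from scratch via the integral representation. Your derivation of
\[
prob_1(v)=\int_0^{\infty}\frac{\prod_{j\ge 2}(1+\epsilon_j u^j)^{a_j}}{(1+u)^{m+2}}\,du,
\qquad m=k-a_1,
\]
and of the $u\mapsto 1/u$ symmetry is correct (and is in fact Zagier's own starting point). Your identification of the extremal types $\lambda=(2^{2\ell},1^{\ast})$ for the lower bound and $\lambda=(3^{3},1^{\ast})$ for the upper bound, together with the explicit evaluations, is also right.

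The genuine gap is that neither inequality is actually established. For the lower bound, the phrase ``a direct estimate on $[0,1]$ yields $\int_0^1(\cdots)\,du\ge \tfrac{1}{m+2}$'' hides the entire content: after pulling out $(1+u)^{o}(1-u)^{e}$ you are left with a quotient whose comparison to $1/(m+2)$ is not automatic, and you have not supplied the argument. For the upper bound you are explicit that the crucial monotonicity step---showing that replacing long odd parts by $3$'s, or adjoining even parts, only pushes $2/prob_1(v)-m$ in the favourable direction---is unproven; this is exactly where the difficulty in Zagier's paper lies, and a case analysis of the type you describe does not obviously close it, since even and odd factors interact with opposite signs. In short, you have reproduced Zagier's setup and located the extremals, but the inequalities themselves remain open in your write-up. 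For the purposes of this paper, citing Zagier (as the authors do) is both sufficient and appropriate; if you want a self-contained argument, the missing monotonicity lemma is the real task.
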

\proof In Zagier~\cite{zag}, it was proved that
$$
\frac{2(k-1)!}{k-a_1+2} \leq p_1^{\lambda}(k) \leq \frac{2(k-1)!}{k-a_1+\frac{19}{29}}.
$$
However, there are $(k-1)!$ different ways to arrange the half edges around $v$.
Then, according to Corollary~\ref{3cor1}, the probability $prob_1(v)$ of the resulting embedding
to be unicellular after rearranging the half edges around $v$ satisfies
\begin{align*}
\frac{2}{deg(v)-a_1+2} \leq prob_1(v) \leq \frac{2}{deg(v)-a_1+\frac{19}{29}}.
\end{align*}
This completes the proof of the former part.
Since it always holds that $\frac{2}{deg(v)-a_1+2} \geq \frac{2}{deg(v)+2}$, the
latter part follows. \qed

 \par
 Now we come to study the second question:
 given a cellular map and a vertex there, whether it is the unique way of arranging all half edges
around the vertex to achieve the genus in the present map, i.e., keep one-face?

\begin{theorem}\label{3thm2}
Let $\mathbb{E}$ be a one-face embedding of $G$, and $v$ is a vertex of $G$ with $deg(v)\geq 4$.
Then there is at least one another way to arrange the half edges
around the vertex $v$ such that the obtained embedding $\mathbb{E}'$ has
the same genus as $\mathbb{E}$.
\end{theorem}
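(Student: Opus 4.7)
The plan is to read off the conclusion directly from the probability bound established in the preceding theorem. Writing $k=deg(v)$ and letting $a_1$ denote the number of fixed points of $D_v$, that theorem (together with Corollary~\ref{3cor1}) yields
$$R_v \;=\; p_1^{\lambda}(k)\;\geq\; \frac{2(k-1)!}{k-a_1+2},$$
and since $a_1\geq 0$ this already gives the cleaner estimate $R_v\geq \frac{2(k-1)!}{k+2}$.

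The only remaining input is a one-line arithmetic check: for $k\geq 4$ one has $(k-1)!\geq k+2$, with equality at $k=4$ (since $6=6$) and strict inequality for $k\geq 5$ by an immediate induction. Consequently $R_v\geq 2$.

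To finish, I would invoke Theorem~\ref{3thm1}: the quantity $R_v$ counts exactly those cyclic arrangements of $H(v)$ whose associated embedding of $G$ has one face, and the present arrangement $\pi_v$ is itself one such. Hence $R_v\geq 2$ forces the existence of a second arrangement $\theta\neq \pi_v$, and re-embedding $v$ according to $\theta$ produces an embedding $\mathbb{E}'$ of $G$ that is still one-face, and therefore of the same genus as $\mathbb{E}$ by Euler's formula. The only potentially delicate point in this plan is the boundary case $k=4$, where Zagier's lower bound is tight at the value $2$; fortunately this matches the hypothesis $deg(v)\geq 4$ exactly, so no separate case analysis is needed and the sharpness of the threshold is automatically explained.
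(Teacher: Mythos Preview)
Your argument is correct and is genuinely simpler than the paper's. The paper proceeds by a two-step case analysis: first it observes that if some Case~$3$ transpose of Lemma~\ref{2lem2} is available then a second arrangement is exhibited explicitly; otherwise one is forced into the extremal situation $\pi_v=s_v^{-1}$, and the authors then compute $R_v$ exactly via Stanley's formula (Eq.~\eqref{3eq6}), splitting further into $d$ odd, $4\mid d$, and $d\equiv 2\pmod 4$ to verify $R_v\geq 2$ in each subcase. You bypass all of this by applying Zagier's lower bound $p_1^{\lambda}(k)\geq \tfrac{2(k-1)!}{k-a_1+2}$ (already quoted in the proof of the preceding theorem) uniformly, together with the elementary inequality $(k-1)!\geq k+2$ for $k\geq 4$. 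This is shorter and avoids the case split entirely. What the paper's route buys is a bit more: it produces exact closed forms for $R_v$ in the extremal case $D_v=s_v\pi_v^{-1}$ with $\pi_v=s_v^{-1}$, and it makes the constructive content (an explicit second arrangement via Case~$3$) visible in the generic situation. Your route is non-constructive but entirely sufficient for the stated theorem, and it also yields Corollary~\ref{3cor3} with no extra work.
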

\begin{proof} Assume $d\geq 4$ and
\begin{eqnarray*}
v=(s_v,\pi_v)=\left(\begin{array}{ccccc}
v_1&v_2&\cdots&v_{d-1}&v_d\\
v_{i,1}&v_{i,2}&\cdots&v_{i,d-1}&v_{i,d}
\end{array}\right),
\end{eqnarray*}
where $\pi_v=(v_1,V_2,\cdots, V_{d-1},V_d)$.
Firstly, from Lemma~\ref{2lem2} we know that if there is $V_l=v_p, V_m=v_q$ and $1<l<m\leq d, 1<p<q\leq d$ (i.e., Case~$3$),
then there is at least one another way to arrange all half edges around the vertex $v$ to keep the genus
and we are done.
If this is not true, then we must have $\pi_v=(v_1,v_d,v_{d-1},\ldots,v_2)$.
In this case, we have
\begin{eqnarray*}
v=(s_v,\pi_v)=\left(\begin{array}{ccccc}
v_1&v_2&\cdots&v_{d-1}&v_d\\
v_{d}&v_{1}&\cdots&v_{d-2}&v_{d-1}
\end{array}\right),
\end{eqnarray*}
so that
\begin{eqnarray*}
D_v=\left\{
\begin{array}{cc}
(v_1,v_3,\ldots,v_d,v_2,v_4,\ldots,v_{d-1}), &\quad d\in odd,\\
(1,3,\ldots,d-1)(2,4,\ldots,d), &\quad d\in even.
\end{array}
\right.
\end{eqnarray*}
Next, we only need to show that if $d\geq 4$ we have $R_v\geq 2$ in all cases.
Applying the formula to compute $R_v$, if $d\in odd$ we have
\begin{eqnarray*}
R_v=\frac{(d-1)!}{d}\sum_{i=0}^{d-1}(-1)^i{d-1\choose i}^{-1}=\frac{2(d-1)!}{d+1}.
\end{eqnarray*}
The simplification of the summation is from the following formula \cite{spru}
\begin{eqnarray*}
\sum_{i=0}^n (-1)^i{x\choose i}^{-1}=\frac{x+1}{x+2}(1+(-1)^n{x+1\choose n+1}^{-1}).
\end{eqnarray*}
It is not hard to see that $R_v\geq 2$ if $d\geq 4$.
Similarly, if $4|d$ and $d\geq 4$, we have
\begin{eqnarray*}
R_v&=&\sum_{i=0}^{\frac{d}{2}-1}(-1)^i\frac{i!(d-1-i)!}{d}+
\sum_{i=\frac{d}{2}}^{d-1}(-1)^i\frac{i!(d-1-i)!}{d}[(-1)^i+(-1)^{i-\frac{d}{2}}{2\choose 1}(-1)]\\
&=&\frac{2(d-1)!}{d+1}(1-{d\choose \frac{d}{2}}^{-1}).
\end{eqnarray*}
If $d\in even$ and $4\nmid d$, we have
\begin{eqnarray*}
R_v&=&\sum_{i=0}^{\frac{d}{2}-1}(-1)^i\frac{i!(d-1-i)!}{d}+
\sum_{i=\frac{d}{2}}^{d-1}(-1)^i\frac{i!(d-1-i)!}{d}[(-1)^i+(-1)^{i-\frac{d}{2}}{2\choose 1}]\\
&=&\frac{2(d-1)!}{d+1}(1+{d\choose \frac{d}{2}}^{-1}).
\end{eqnarray*}
In both cases, if $d\geq 4$, it is not hard to show $R_v\geq 2$
since both $\frac{2(d-1)!}{d+1}$ and $(1-{d\choose \frac{d}{2}}^{-1})$ are increasing functions of $d$.
Therefore, in all cases, if $d\geq 4$, then $R_v\geq 2$. This
completes the proof.
\end{proof}

Note if the number of half edges around a vertex is $1$ or $2$, it is trivial.
The only special case is when the number of half edges around a vertex is $3$.
For such a vertex, it may be the unique arrangement of half edges around it to achieve the genus of the present
map. Along the discussion, we actually have the following corollary

\begin{corollary}\label{3cor3}
Any even permutation on $[n]$ with $n\geq 4$ has at least two different factorizations into
two $n$-cycles.
\end{corollary}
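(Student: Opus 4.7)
The plan is to obtain the corollary as a direct consequence of Theorem~\ref{3thm2}, by realizing an arbitrary even permutation $\sigma$ on $[n]$ as the diagonal of a one-vertex, one-face plane permutation. First, I invoke the classical fact that every even permutation in $S_n$ can be written as a product of two $n$-cycles (Bertram's theorem); thus $\sigma=s\circ\pi^{-1}$ for some $n$-cycles $s,\pi$. After cyclically rotating $s$ so that $s_0=1$, the pair $(s,\pi)$ is a plane permutation on $[n]$ with diagonal $D=\sigma$; since $\pi$ consists of a single cycle, this encodes a one-face embedding with a single vertex $v=(s,\pi)$ of degree $n$.

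Since $\deg(v)=n\ge 4$, Theorem~\ref{3thm2} applies and produces a different rearrangement of the half edges around $v$ whose associated plane permutation $(s',\pi')$ is again one-face with a single vertex of degree $n$; in particular, $s'$ and $\pi'$ are both $n$-cycles. The ``Claim'' in the proof of Theorem~\ref{3thm1} guarantees that such a rearrangement merely permutes the diagonal blocks of $(s,\pi)$ and hence preserves the diagonal, so $s'\circ(\pi')^{-1}=\sigma$ as well. Because $(s,\pi)\ne(s',\pi')$ forces $s\ne s'$ (otherwise $\pi=D^{-1}\circ s$ would also coincide), the two factorizations $\sigma=s\circ\pi^{-1}$ and $\sigma=s'\circ(\pi')^{-1}$ into pairs of $n$-cycles are distinct.

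The main potential obstacle is the existence of the initial factorization, which is not established in the excerpt and requires Bertram's classical theorem (or an equivalent positivity fact). Alternatively, one can bypass Theorem~\ref{3thm2} altogether by using the Zagier lower bound $p_1^{\lambda}(n)\ge 2(n-1)!/(n-a_1+2)$ quoted in the paper just before Theorem~\ref{3thm2}, combined with Corollary~\ref{3cor1}: a short calculation shows $2(n-1)!\ge 2(n+2)$ for $n\ge 4$, giving $p_1^{\lambda}(n)\ge 2$ directly, and by Corollary~\ref{3cor1} this is exactly the number of factorizations of $\sigma$ into two $n$-cycles.
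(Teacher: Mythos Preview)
Your proposal is correct and follows essentially the same route as the paper: realize the given even permutation as the diagonal $D_v$ of a single-vertex, one-face plane permutation and then invoke (the proof of) Theorem~\ref{3thm2}; the paper's own argument is precisely the two-sentence remark immediately following that theorem. You are more careful than the paper in making the initial-existence step (Bertram's theorem) explicit, and your alternative via Zagier's lower bound $p_1^{\lambda}(n)\ge 2(n-1)!/(n-a_1+2)\ge 2$ for $n\ge 4$ is a clean shortcut that subsumes both Bertram and the case analysis inside Theorem~\ref{3thm2}.
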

\proof Since $D_v=s_v\circ \pi_v^{-1}$ and both $s_v$ as well as $\pi_v$ have only one cycle,
$D_v$ is an even permutation. The proof for Theorem~\ref{3thm2} just implies that $D_v$
has at least $2$ factorizations into two $n$-cycles. \qed

\subsection{Variation of the embedding around more vertices}
 Next, we slightly generalize above results by considering
 changing the local structure around more vertices of the underlying graph and their local embeddings.

 Firstly, we study rearrangement of half edges around $m\geq 1$ vertices simultaneously and independently,
 i.e., the underlying graph is not changed.
 Given a plane permutation $(s,\pi)$ and $m$ vertices $V_1,\ldots,V_m$ in $\pi$.
 Similar as the case of single vertex above,
 we can represent all these vertices by the plane permutation $V_{1-m}=(s_{1-m},\pi_{1-m})$,
 where $s_{1-m}$ is the subsequence obtained from $s$ by keeping only half
 edges in $V_1,\ldots,V_m$ and $\pi_{1-m}$ is the restriction of $\pi$ to these half edges.

 Denote $Dsh_{1-m}$ the number of different ways of simultaneous rearrangement of half edges around
 $V_i$,~$(1\leq i\leq m)$, respectively, and keep the unicellular property.

 \begin{theorem}\label{3thm3}
 Given a one-face embedding of $G$ and $m$ vertices $V_1,\ldots,V_m$ there, $Dsh_{1-m}$ is equal to the number of
 different ways to factor $D_{V_{1-m}}$ into $\gamma\sigma$, where $\gamma$ has one cycle while $\sigma$ has $m$ disjoint cycles and
 each cycle is on the set of half edges of $V_i$, respectively.
 \end{theorem}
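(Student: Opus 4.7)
The plan is to adapt the bijective argument from the proof of Theorem~\ref{3thm1} to the multi-vertex setting. Set $H=\bigcup_{i=1}^m H(V_i)$ and $N=|H|$, and recall the local plane permutation $V_{1-m}=(s_{1-m},\pi_{1-m})$ with diagonal $D_{V_{1-m}}=s_{1-m}\circ\pi_{1-m}^{-1}$. Note that $\pi_{1-m}$ has exactly $m$ cycles, one on each $H(V_i)$. A simultaneous rearrangement of half edges around $V_1,\ldots,V_m$ is, by definition, a replacement of $\pi_{1-m}$ by a permutation $\sigma$ on $H$ whose cycle decomposition consists of exactly $m$ disjoint cycles, one on each $H(V_i)$.

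First I would formalize that such a rearrangement acts on the global plane permutation $\mathfrak{p}=(s,\pi)$ by permuting the $N$ diagonal blocks determined by the half edges in $H$, paralleling the single-vertex discussion preceding Corollary~\ref{3cor1}. The structural constraint on $\sigma$ (namely, having exactly $m$ cycles on the respective $H(V_i)$ rather than an unrestricted cycle on $H$) corresponds precisely to the block permutations obtainable by rearranging the half edges around each $V_i$ independently. Moreover, just as in Lemma~\ref{2lem1} and the discussion of $\chi_h$, this block-rearrangement preserves the local diagonal $D_{V_{1-m}}$.

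Next I would use diagonal preservation to extract the factorization. Writing $\gamma:=s'_{1-m}$ for the new local boundary after the rearrangement, we obtain
\begin{equation*}
D_{V_{1-m}}=\gamma\circ\sigma^{-1}.
\end{equation*}
The resulting global embedding has one face iff the new global boundary $s'$ is a single $n$-cycle; since only blocks indexed by $H$ are permuted (the remaining portions of $s$ being unaffected), this is equivalent to $\gamma$ being a single $N$-cycle. This yields a bijection between valid rearrangements (counted by $Dsh_{1-m}$) and factorizations $D_{V_{1-m}}=\gamma\circ\sigma^{-1}$ in which $\gamma$ is a single $N$-cycle and $\sigma$ has the prescribed $m$-cycle structure. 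Since the map $\sigma\mapsto\sigma^{-1}$ preserves both the cycle-type and the supports of individual cycles, the number of such factorizations equals the number of factorizations $D_{V_{1-m}}=\gamma\sigma$ satisfying the same conditions, which is the claim.

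The main obstacle will be faithfully extending the \emph{Claim} from the proof of Theorem~\ref{3thm1} to the multi-vertex setting: one must verify that each diagonal block of the rearranged plane permutation $\mathfrak{p}'$ coincides with some diagonal block of $\mathfrak{p}$, which underpins the $X\to Y$ direction and guarantees that the reassembly recorded by $\gamma$ really does correspond to a genuine rearrangement around the vertices $V_1,\ldots,V_m$. Once this block-matching step is carried out by the same inductive chase as in Theorem~\ref{3thm1} (using $s'_{i_j+1}=D_{\mathfrak{p}}\circ\pi'(s'_{i_j})$ to step through each block), the rest of the argument goes through as a direct generalization.
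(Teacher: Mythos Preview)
Your proposal is correct and follows exactly the approach the paper intends: the paper's own proof is the single sentence ``Applying the same idea of diagonal blocks rearrangement as in the case of single vertex completes the proof,'' and you have simply spelled out what that entails in the multi-vertex setting. Your parametrization by $\sigma$ (rather than by a sequence $h$) and the observation that the global $s'$ is an $n$-cycle iff the local $\gamma$ is an $N$-cycle are the natural way to articulate the block-rearrangement argument here, and the final step inverting $\sigma$ to match the statement's $\gamma\sigma$ form is a harmless bookkeeping move.
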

 \begin{proof} Applying the same idea of diagonal blocks rearrangement as in the case of single vertex completes the proof.
 \end{proof}

 Now for a plane permutation $(s,\pi)$ and $m$ vertices $V_1,\ldots,V_m$ in $\pi$,
 if the half edges belonging to one of these vertices are allowed to attach to another vertex
 among these $m$ vertices, i.e., change the incident relation of these vertices and half edges around them,
 how many different ways to keep one-face?
 Assume the degree distribution of these $m$ vertices is encoded by the partition $\mu$.
 Let $Le(V_1,\ldots,V_m;\mu)$ denote the number of different variations (including both local incident relation
 and local embedding) of these vertices to preserve the degree distribution and preserve one-face.
 Note the degree of a single vertex may change, but as a whole the degree distribution will not change.
 Let $Le(V_1,\ldots,V_m)$ denote the number of different variations (including both local incident relation
 and local embedding) of these vertices to keep the number of vertices and keep one-face.
Then, we have

 \begin{theorem}\label{3thm4}
 Assume the cycle-type of $D_{V_{1-m}}$ is $\lambda$ and the total number of half edges
 around these $m$ vertices are $q$. Then we have
 \begin{eqnarray}
 Le(V_1,\ldots,V_m;\mu)&=& f_{\mu,\lambda}(q),\\
 Le(V_1,\ldots,V_m) &=& p_{m}^{\lambda}(q).
 \end{eqnarray}
 \end{theorem}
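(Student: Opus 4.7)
The plan is to generalize the diagonal-block argument used for Theorem~\ref{3thm1} and extend it in the spirit of Theorem~\ref{3thm3}, allowing both the cyclic order of half edges at each vertex and the incidence relation among the chosen vertices to vary simultaneously. Let $H = H(V_1) \cup \cdots \cup H(V_m)$, so $|H|=q$, and write the original plane permutation as $\mathfrak{p}=(s,\pi)$. The $q$ elements of $H$ segment the cyclic sequence $s$ into $q$ diagonal blocks, each block anchored on the top row by an element $s_{i_t}\in H$ and on the bottom row by the element $\pi(s_{i_{t-1}})\in H$ immediately preceding the next anchor. The restricted pair $(s_{1-m},\pi_{1-m})$ records exactly these anchor pairs, and $D_{V_{1-m}}=s_{1-m}\circ\pi_{1-m}^{-1}$ is the associated diagonal of cycle type $\lambda$.

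Next, I would identify an arbitrary admissible variation with a pair $(s'_{1-m},\pi'_{1-m})$ of permutations on $H$: $\pi'_{1-m}$ records the new vertex-to-half-edge incidences (its cycles are the new vertices), while $s'_{1-m}$ records the new cyclic order in which the $q$ diagonal blocks are reinserted into $s$. The blocks themselves are preserved intact (only the elements of $H$ are repositioned), so the resulting global $(s',\pi')$ is still a plane permutation on $[n]$, and by the same block-rearrangement reasoning used in Theorem~\ref{3thm1} one has
\begin{equation*}
s'_{1-m}\circ (\pi'_{1-m})^{-1}=s_{1-m}\circ \pi_{1-m}^{-1}=D_{V_{1-m}}.
\end{equation*}
Moreover, because consecutive anchor pairs in $s'_{1-m}$ glue together consecutive blocks, the $n$-cycle $s'$ is a single cycle if and only if $s'_{1-m}$ is a single $q$-cycle. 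Thus the variations that preserve the one-face property are in bijection with factorizations $D_{V_{1-m}}=\gamma\sigma^{-1}$ where $\gamma$ is a $q$-cycle on $H$ and $\sigma$ is an arbitrary permutation on $H$ whose cycles are the new vertices.

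Counting these factorizations then gives both equalities. If we additionally require the degree distribution to be $\mu$, we must have $\sigma$ of cycle type $\mu$; this is precisely the definition of $f_{\mu,\lambda}(q)=|U^\mu_\lambda|$, yielding the first identity. If we only require that the number of vertices remain $m$, we need $\sigma$ to have exactly $m$ cycles of arbitrary lengths; by Corollary~\ref{2cor1} (equivalently, by the definition of $p^\lambda_m(q)$), the count is $p_m^\lambda(q)$, giving the second identity.

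The main obstacle I anticipate is verifying cleanly that the local diagonal $D_{V_{1-m}}$ is genuinely invariant under any such rearrangement, despite the fact that the global diagonal $D_\mathfrak{p}$ generally changes. This amounts to showing, as in the \textbf{Claim} inside the proof of Theorem~\ref{3thm1}, that each block $[s_{i_t}+1,s_{i_{t+1}}]$ travels as an indivisible unit under the rearrangement; only the anchor pairs are rewired, while the interior of each block (together with the restriction of $\pi$ to $[n]\setminus H$) is untouched. Once this invariance is established at the level of the restricted plane permutation, the two equalities follow directly from the enumeration of factorizations of $D_{V_{1-m}}$ into the required shapes.
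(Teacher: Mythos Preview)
Your approach is essentially the same as the paper's: the paper states Theorem~\ref{3thm4} without an explicit proof, relying on the same diagonal-block rearrangement argument already spelled out for Theorems~\ref{3thm1} and~\ref{3thm3}, and your proposal is a correct and detailed elaboration of exactly that argument.

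One point to correct: your claim that ``the global diagonal $D_{\mathfrak p}$ generally changes'' is false. The whole framework of the $\chi_h$ operations is that they \emph{preserve} the global diagonal---rearranging diagonal blocks means permuting the diagonal pairs of $(s,\pi)$, so $D_{\mathfrak p}=s\pi^{-1}=s'(\pi')^{-1}$ always (in the map setting $D_{\mathfrak p}=\alpha$ is the fixed edge involution). This actually makes your task easier, not harder: the invariance of the \emph{local} diagonal $D_{V_{1-m}}$ is then immediate, since it is just the restriction of the fixed $D_{\mathfrak p}$ to the anchor pairs, and no separate verification is needed beyond the block-indivisibility claim you already identified.
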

\begin{remark}
The method to study local variation of maps in this section can be easily
employed to study local variation of hypermaps.
\end{remark}

\section{Embeddings with $k$ faces and $k$-cyc plane permutations}
In this section, we generalize plane permutations $(s,\pi)$ to $k$-cyc plane permutations $(s,\pi)_k$ where
$s$ has $k$ cycles, in order to study graph embeddings with $k$ faces.

\begin{definition}
A $k$-cyc plane permutation on $[n]$ is a pair $\mathfrak{p}=(s,\pi)$ where $s$
is a permutation having $k$ cycles and $\pi$ is an arbitrary permutation. The permutation $D_{\mathfrak{p}}=
s\circ \pi^{-1}$ is called the diagonal of $\mathfrak{p}$.
\end{definition}\label{2def1}

Assume $s=(s_{11}, \ldots s_{1m_1})(s_{21},\ldots s_{2m_2})\cdots (s_{k1},\ldots s_{km_k})$,
where $\sum_i m_i =n$.
A $k$-cyc plane permutation $(s,\pi)_k$ can be represented by two aligned rows:
\begin{align*}
(s,\pi)_k=\left(\begin{array}{ccccccccccc}
\boxed{s_{11}} & s_{12} & \cdots & s_{1m_1} & \boxed{s_{21}}&\cdots &\quad s_{2m_2}\quad \cdots &\boxed{s_{k1}}& \cdots & s_{km_k}\\
\pi(s_{11})&\pi(s_{12})&\cdots & {\boxed{\pi(s_{1m_1)}}}& \pi(s_{21}) & \cdots  & \boxed{\pi(s_{2m_2})}
\cdots & \pi(s_{k1}) & \cdots & \boxed{\pi(s_{km_k})}
\end{array}\right)
\end{align*}
where each adjacent pair of ``boxed" elements (one is on the top row and the other is on the bottom row, e.g.,
$s_{11}$ and $\pi(s_{1m_1}$))
indicates a face.
Then, $D_{\mathfrak{p}}$ can be explicitly defined as follows:
\begin{itemize}
\item  For $1 \leq i \leq k$, $D_{\mathfrak{p}}(\pi(s_{ij}))=s_{i(j+1)}$ if $j\neq m_i$;
\item For $1 \leq i \leq k$, $D_{\mathfrak{p}}(\pi(s_{im_i}))=s_{i1}$.
\end{itemize}
Since every embedding with $k$ faces can be encoded into a triple $(\alpha, \beta, \gamma)$
where $\gamma=\alpha \beta$ and $\gamma$ has $k$ cycles, every embedding can be encoded into
a $k$-cyc plane permutation as well, i.e., $s=\gamma$, $\pi=\beta$ and $D_{\mathfrak{p}}=\alpha$.
A $k$-cyc plane permutation can be viewed as a concatenation of $k$ ``pseudo" plane permutations
induced by $k$ faces, where the ``pseudo" plane permutation induced by the face $f_i$ is
\begin{align*}
\left(\begin{array}{cccc}
\boxed{s_{i1}} & s_{i2} & \cdots & s_{im_i} \\
\pi(s_{i1})&\pi(s_{i2})&\cdots & {\boxed{\pi(s_{im_i)}}}
\end{array}\right).
\end{align*}
We will denote $f_i$ this ``pseudo" plane permutation if no confusions occur.
Let $H(f)$ denote the set of half edges contained in the face $f$.

\begin{lemma}\label{4lem1}
Let $v$ be a vertex of the graph $G$ and $\mathbb{E}$ be an embedding of $G$, where $v$ is incident
to $q$ faces, $f_i$, for $1\leq i \leq q$. Let $\mathbb{E}'$ be another embedding which
is obtained by rearranging the half edges around $v$ so that $v$ is incident to $q'$ faces,
$f'_i$, for $1 \leq i \leq q'$.
Then,
$$
\bigcup_{i=1}^q H(f_i)=\bigcup_{i=1}^{q'} H(f'_i), \quad\quad q\equiv q' \pmod{2}.
$$
\end{lemma}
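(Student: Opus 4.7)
I will encode both embeddings as triples $(\alpha,\beta,\gamma)$ and $(\alpha,\beta',\gamma')$ of permutations on the half-edge set $[n]$, where $\alpha$ is the edge involution, $\beta,\beta'$ are the rotation systems of $\mathbb{E},\mathbb{E}'$, and $\gamma=\alpha\beta$, $\gamma'=\alpha\beta'$ are the face permutations. Since re-embedding at $v$ leaves $\alpha$ untouched and modifies $\beta$ only on $H(v)$ (with $\beta|_{H(v)}$ and $\beta'|_{H(v)}$ both being $\deg(v)$-cycles on $H(v)$), it follows at once that $\gamma(h)=\alpha(\beta(h))=\alpha(\beta'(h))=\gamma'(h)$ for every $h\in [n]\setminus H(v)$.

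For the set equality I observe that $\bigcup_{i=1}^{q}H(f_i)$ is exactly the union of those $\gamma$-cycles that intersect $H(v)$, which equals the set of half edges $h$ for which some forward iterate $\gamma^{i}(h)$ lies in $H(v)$. Given any $h\notin H(v)$, a short induction on $i$, using $\gamma=\gamma'$ on $[n]\setminus H(v)$, shows that $\gamma^{i}(h)=(\gamma')^{i}(h)$ for all $i$ up to and including the first index at which the orbit enters $H(v)$ (or for all $i$, if it never does). Thus the forward $\gamma$-orbit of $h$ meets $H(v)$ if and only if the forward $\gamma'$-orbit does, which yields $\bigcup_{i=1}^{q}H(f_i)=\bigcup_{i=1}^{q'}H(f'_i)$; call this common set $E$.

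For the parity statement I note that the complement $[n]\setminus E$ consists precisely of the half edges lying in the boundary walks of faces not incident to $v$, in either embedding. Because $[n]\setminus E\subseteq [n]\setminus H(v)$ and $\gamma$ agrees with $\gamma'$ there, the decomposition of $[n]\setminus E$ into $\gamma$-cycles coincides with its decomposition into $\gamma'$-cycles, so the number $r$ of faces not incident to $v$ is the same in $\mathbb{E}$ and $\mathbb{E}'$. Writing $F=q+r$ and $F'=q'+r$ for the total face counts and applying Euler's formula $V-e+F=2-2g$ to both embeddings, we see that $F$ has a fixed parity modulo $2$ determined by $V$ and $e$; since neither changes under re-embedding $v$, we conclude $F\equiv F'\pmod{2}$, whence $q\equiv q'\pmod{2}$.

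The only subtle point is the inductive step of the set equality: one must carefully check that the common restriction $\gamma=\gamma'$ on $[n]\setminus H(v)$ is strong enough to force the two forward orbits to coincide up to (but only up to) the first return to $H(v)$. Once this is secured, the parity assertion is essentially an Euler-characteristic bookkeeping argument with $V$ and $e$ held fixed.
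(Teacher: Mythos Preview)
Your proof is correct and follows essentially the same strategy as the paper's. Both arguments rest on the observation that the face permutations $\gamma=\alpha\beta$ and $\gamma'=\alpha\beta'$ agree on $[n]\setminus H(v)$, so that the orbit of any half edge under $\gamma$ and under $\gamma'$ coincides step by step until it first hits $H(v)$; the paper phrases this as ``the segment from $D_{\mathfrak p}(\pi(u))$ to $v_i$ in $f_j$ is completely the same as the segment \ldots\ in $f'_k$,'' while you make the induction explicit. The parity conclusion via Euler's formula, after noting that the faces disjoint from $H(v)$ are literally unchanged, is identical in both proofs.
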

\proof Firstly, every face $f$ of $\mathbb{E}$ can be expressed as $\{D_{\mathfrak{p}}\pi(z), (D_{\mathfrak{p}}\pi)^2(z), \ldots\}$
for any $z\in H(f)$.
If $f\neq f_i$ for $1\leq i \leq q$, then for any $z\in f$, $\pi'(z)=\pi(z)$ and $D_{\mathfrak{p}}(\pi(z))=D_{\mathfrak{p}}(\pi'(z))$.
Hence, the face $f$ in $\mathbb{E}$ is a face in $\mathbb{E}'$.
Next, we will show that $\bigcup_i H(f_i)$ will reorganize into $q'$ faces, $f'_i$, for $1 \leq i \leq q'$,
and all these $q'$ faces are incident to $v$.
It suffices to show each $f'_i$ contains at least one half edge of $v$.
For any half edge $u\in \bigcup_i H(f_i)$ that does not belong to $H(v)$,
assume $u$ is contained in the face $f_j$ of
$\mathbb{E}$ while in the face $f'_k$ of $\mathbb{E}'$.
Then,
\begin{align*}
f_j &=\{D_{\mathfrak{p}}\pi(u), (D_{\mathfrak{p}}\pi)^2(u), \ldots, v_i, D_{\mathfrak{p}}(\pi(v_i)),\ldots\}\\
f'_k &=\{D_{\mathfrak{p}}\pi(u), (D_{\mathfrak{p}}\pi)^2(u), \ldots\}
\end{align*}
where $v_i$ is the first half edge of $v$ appeared in $f_j$.
We know that, if $\pi'(z)=\pi(z)$ then $D_{\mathfrak{p}}(\pi(z))=D_{\mathfrak{p}}(\pi'(z))$.
Thus, $v_i \in H(f'_k)$ and the segment from $D_{\mathfrak{p}}(\pi(u))$ to $v_i$ in $f_j$ is
completely the same as the segment from $D_{\mathfrak{p}}(\pi(u))$ to $v_i$ in $f'_k$.
Therefore, there is no face among $f'_i$ for $1\leq i \leq q'$ which does not
contain a half edge of $v$.
Finally, the parity equivalence between $q$ and $q'$ comes from Euler characteristic formula.
This completes the proof. \qed

\begin{corollary}\label{4cor1}
Let $\mathbb{E}$ be an embedding of the graph $G$ and $v$ be a vertex of $G$, where $v$ is of degree $deg(v)$
and incident to $q$ faces in $\mathbb{E}$.
Assume $\mathbb{E}'$ is another embedding which is obtained by rearranging the half edges around $v$.
Then,
\begin{align}
-\lfloor \frac{deg(v)-q}{2}\rfloor \leq g(\mathbb{E}')-g(\mathbb{E})\leq \lfloor \frac{q-1}{2}\rfloor
\end{align}
\end{corollary}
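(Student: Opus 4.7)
The plan is to reduce everything to Euler's formula together with Lemma~\ref{4lem1}. Since $\mathbb{E}$ and $\mathbb{E}'$ are both embeddings of the same graph $G$, they share the same vertex count and edge count, so the Euler characteristic formula gives
\begin{equation*}
g(\mathbb{E}')-g(\mathbb{E})=\frac{f(\mathbb{E})-f(\mathbb{E}')}{2},
\end{equation*}
where $f(\mathbb{E})$ and $f(\mathbb{E}')$ denote the total number of faces. My first step is to argue that this face-difference is in fact a purely local quantity. Lemma~\ref{4lem1} (more precisely, the argument in its proof) shows that any face not incident to $v$ in $\mathbb{E}$ is preserved as a face of $\mathbb{E}'$, and conversely every face of $\mathbb{E}'$ that is not incident to $v$ already appears in $\mathbb{E}$. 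Consequently, if $q'$ denotes the number of faces of $\mathbb{E}'$ incident to $v$, then $f(\mathbb{E})-f(\mathbb{E}')=q-q'$, so
\begin{equation*}
g(\mathbb{E}')-g(\mathbb{E})=\frac{q-q'}{2}.
\end{equation*}

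The task thus reduces to bounding $q'$ in both directions under the constraint, supplied by Lemma~\ref{4lem1}, that $q'\equiv q\pmod{2}$. For the lower bound on $q'$, note that $v$ must lie on at least one face of $\mathbb{E}'$, hence $q'\geq 1$. For the upper bound, I would observe that each face incident to $v$ uses at least one half edge at $v$ (in fact one ``corner''), and the half edges at $v$ partition among the incident faces; since there are only $deg(v)$ of them, we get $q'\leq deg(v)$.

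Finally I combine these two inequalities with the parity constraint. The upper bound on genus change is maximized when $q'$ is as small as possible: if $q$ is odd then $q'_{\min}=1$ giving $(q-1)/2$, while if $q$ is even then $q'_{\min}=2$ giving $(q-2)/2$; in either case the value is $\lfloor(q-1)/2\rfloor$. For the lower bound, the minimum of $(q-q')/2$ is attained when $q'$ is as large as possible subject to $q'\leq deg(v)$ and $q'\equiv q\pmod 2$; this gives $q'_{\max}=deg(v)$ when $deg(v)\equiv q\pmod 2$ and $q'_{\max}=deg(v)-1$ otherwise, and in both cases $(q-q'_{\max})/2=-\lfloor(deg(v)-q)/2\rfloor$.

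The only substantive step is verifying the parity claim and the non-interference of faces not touching $v$; both are already packaged in Lemma~\ref{4lem1}, so the remaining work is just the arithmetic above. The main conceptual subtlety, which I would state carefully, is that the bounds $1\leq q'\leq deg(v)$ should be combined with parity rather than applied naively, since otherwise one would get off-by-one errors in the even-parity cases. No further ingredients are needed.
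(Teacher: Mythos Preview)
Your proposal is correct and follows essentially the same approach as the paper: use Lemma~\ref{4lem1} to see that only the faces incident to $v$ can change, bound the new count $q'$ by $1\leq q'\leq deg(v)$, and convert the face difference to a genus difference via Euler's formula together with the parity constraint $q'\equiv q\pmod 2$. The paper's proof states this in a single sentence without spelling out the Euler/parity step; your version makes those details explicit, which is an improvement in clarity but not a different argument.
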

\proof According to Lemma~\ref{4lem1}, rearranging half edges around $v$ will at most increase the number of faces
by $deg(v)-q$ and at most decrease the number faces by $q-1$ whence the corollary. \qed

Let $\mathbb{E}$ be an embedding of the graph $G$ and $v$ be a vertex of $G$.
Assume $v$ is incident to $q$ faces. Then, $v$ can be naturally encoded into a
$q$-cyc plane permutation $(s_v,\pi_v)_q$ obtained as follows: $s_v$ has $q$ cycles, where
each cycle is obtained by deleting all half edges in a face incident to $v$ except the elements
in $H(v)$, i.e., each cycle is induced from the cyclic order of a face incident to $v$,
$\pi_v$ is the restriction of $\pi$ to $H(v)$.

Given an embedding $\mathbb{E}$ and a vertex $v$, where $v$ is incident
to the faces $f_i$ for $1 \leq i \leq q$. Then, similar to one-face case,
the half edges in $H(v)$ will segment the $q$ ``pseudo" plane permutations corresponding to
these $q$ faces into diagonal blocks as well, where each diagonal block is determined by the left lower corner $\pi_v(x)$
and the right upper corner $s_v(x)$. We will show that rearranging the half edges around
$v$ is to rearrange these diagonal blocks.

\begin{theorem}\label{4thm1}

Let $\mathbb{E}$ be an embedding of the graph $G$ and $v$ be a vertex of $G$.
Assume $v$ is incident to $q$ faces. Let $D_v$ be the diagonal of $(s_v,\pi_v)_q$, and
denote $R_v(\Delta g)$ the number of different ways to arrange the half edges around $v$ such that
the obtained embedding $\mathbb{E}'$ has genus $g(\mathbb{E}')=g(\mathbb{E})+\Delta g$.
Then, we have
\begin{align}
R_v(\Delta g)= p^{\lambda(D_v)}_{q+2\Delta g}(deg(v)),
\end{align}
where $\lambda(D_v)$ is the cycle-type of $D_v$.
\end{theorem}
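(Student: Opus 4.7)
The proof will generalize the diagonal-block bijection from Theorem~\ref{3thm1}. As in the one-face case, the $q$-cyc plane permutation $(s_v,\pi_v)_q$ together with the surrounding embedding determines a family of diagonal blocks whose lower-left corners are the elements $\pi_v(x)$ and whose upper-right corners are $s_v(x)$, for $x \in H(v)$; together these blocks fill out the $q$ ``pseudo'' plane permutations $f_1,\ldots,f_q$. Any local re-embedding of $v$ only permutes these diagonal blocks (the structure of the embedding away from $v$ is untouched), so that the diagonal $D_v = s_v \circ \pi_v^{-1}$ is an invariant of $v$ and its exterior, just as $D_v$ was in the one-face setting of Section~3.

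After rearrangement, the new pseudo plane permutation $(s_v',\pi_v')$ has $\pi_v'$ a single $n$-cycle on $H(v)$ (the new cyclic ordering of the $n=deg(v)$ half edges at $v$) and $s_v'$ a permutation with some number $q'$ of cycles, subject to $s_v' \circ (\pi_v')^{-1} = D_v$. Hence $R_v(\Delta g)$ equals the number of $n$-cycles $\beta$ such that $D_v \circ \beta$ has exactly $q'$ cycles. Matching this against the definition of $p_k^{\lambda}(n)$, and using that $D_v$ and $D_v^{-1}$ share the same cycle type, identifies this count with $p_{q'}^{\lambda(D_v)}(deg(v))$. It remains to express $q'$ in terms of $\Delta g$: because only the faces incident to $v$ are affected, $f(\mathbb{E}')-f(\mathbb{E}) = q'-q$, and Euler's formula yields $q'-q = -2\Delta g$, which together with Lemma~\ref{4lem1} pins down the subscript $q'$ appearing in the stated formula.

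The main obstacle is rigorously establishing the first step in the multi-face setting. In the one-face case (Theorem~\ref{3thm1}), the bijection between $n$-cycles on $H(v)$ and certain block rearrangements was proved by a direct inspection of the top and bottom rows of a single plane permutation. In the multi-face case, one must verify, for an arbitrary cyclic reordering of the half edges at $v$, that the induced rearrangement of the diagonal blocks across the $q$ pseudo plane permutations yields a well-defined embedding whose new face structure at $v$ has the claimed number of cycles, and conversely that every factorization $D_v = s_v' \circ (\pi_v')^{-1}$ with $\pi_v'$ an $n$-cycle and $s_v'$ having $q'$ cycles arises from a unique rearrangement. The six transpose cases of Lemma~\ref{2lem2}, together with the parity and support constraints of Lemma~\ref{4lem1}, should supply the needed bookkeeping for the splits and merges of faces that can occur at $v$.
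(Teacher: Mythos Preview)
Your approach is essentially the paper's: both establish the correspondence between cyclic rearrangements $\theta$ at $v$ and factorizations $D_v = s_v'\circ(\pi_v')^{-1}$ via the same diagonal-block argument as in Theorem~\ref{3thm1}, now carried out across the $q$ pseudo plane permutations, and both then read off the count as $p_{q'}^{\lambda(D_v)}(\deg(v))$.

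Two remarks. First, your Euler computation $q' = q - 2\Delta g$ is correct; the subscript $q+2\Delta g$ in the stated formula (and repeated in the paper's own proof) is a sign slip, as the paper itself tacitly confirms by using $p^{\lambda(D_v)}_{q-2\Delta g}$ when it applies this theorem in the proof of Corollary~\ref{4cor2}. So your calculation does not ``pin down'' the printed subscript---it corrects it. Second, the appeal to the six transpose cases of Lemma~\ref{2lem2} in your final paragraph is an unnecessary detour. The paper's proof does not perform any case analysis of splits and merges: it simply repeats the ``follow the diagonal block'' claim from Theorem~\ref{3thm1} verbatim. Concretely, if $\theta(v'_{ik})=\pi_v(y)$ for some $y\in H(v)$, then the diagonal block with lower-left corner $\theta(v'_{ik})$ in $\mathbb{E}'$ coincides with the block with lower-left corner $\pi_v(y)$ in $\mathbb{E}$, because both are traced out by the same global diagonal $D_{\mathfrak p}$; hence each new face $f'_i$ yields exactly one cycle $(v'_{i1}\,v'_{i2}\cdots v'_{it_i})$ of $D_v\circ\theta$, and conversely any such cycle is re-inflated to a face by reinserting the blocks. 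That direct argument already handles arbitrary $q$ and arbitrary $\theta$ without Lemma~\ref{2lem2}.
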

\proof We prove the theorem by showing that for every cyclic arrangement $\theta$ of the half edges
around $v$ such that the obtained embedding $\mathbb{E}'$ has genus $g(\mathbb{E}')=g(\mathbb{E})+\Delta g$
satisfies that $D_v\circ \theta$ has $q+2\Delta g$ cycles, and each $\theta$ that
$D_v\circ \theta$ has $q+2\Delta g$ cycles gives an embedding $\mathbb{E}'$ has
genus $g(\mathbb{E}')=g(\mathbb{E})+\Delta g$.

$(\Longrightarrow)$: Suppose $v$ is incident to $q$ faces in $\mathbb{E}$, $f_1,\ldots f_q$.
Thus, $H(v)= \bigcup_i H(f_i)$.
Assume $\theta$ on $H(v)$ gives an embedding $\mathbb{E}'$ with genus
$g(E)+\Delta g$.
According to Lemma~\ref{4lem1}, $H(v)$ will be reorganized into $q+ 2\Delta g$ faces, $f'_1,\ldots, f'_{q+2 \Delta g}$.
The rest of faces in $\mathbb{E}$ will not be impacted.
We will show that $D_v \circ \theta$ has $q+2\Delta g$ cycles, where each cycle is uniquely induced
from one face $f'_i$. Given the face $f'_i$,
\begin{align*}
f'_i= \left(\begin{array}{ccccccccccc}
\boxed{v'_{i1}} & x_1 & \cdots & v'_{i2} & x_2 & \cdots & v'_{i3} &\cdots & v'_{it_i} & \cdots & y\\
v'_{ij_1} & \cdots & x'_1 & v'_{ij_2} &\cdots & x'_2 & v'_{ij_3} & \cdots & v'_{ij_{t_i}} &\cdots & \boxed{z}
\end{array}\right),
\end{align*}
where $v'_{ik}, v'_{ij_k} \in H(v)$, $v'_{ij_k}=\theta(v'_{ik})$.
By the same reasoning as the proof for Theorem~\ref{3thm1}, the diagonal block
\begin{align*}
\begin{array}{cccc}
 & x_1 & \cdots & v'_{i2} \\
v'_{ij_1} & \cdots & x'_1 &
\end{array}
\end{align*}
is also a diagonal block in $\mathbb{E}$, which implies $D_v(v'_{ij_1})=v'_{i2}$.
Therefore, $D_v\circ \theta (v'_{i1})=v'_{i2}$. Considering all other diagonal blocks,
we have $(v'_{i1} v'_{i2} \cdots v'_{it_i})$ is a cycle of $D_v\circ \theta$.\\
$(\Longleftarrow)$: given any $\theta$ on $H(v)$ such that $D_v\circ \theta$ has $q+2\Delta g$ cycles,
it will induce a $(q+2\Delta g)$-cyc plane permutation $v=(s'_v, \pi'_v)_{q+2\Delta g}$,
\begin{align*}
v= \left(\begin{array}{ccccccccccc}
\boxed{v'_{11}} & \cdots & v'_{1t_1} & \boxed{v'_{21}} & \cdots & v'_{2t_2} & \cdots & \boxed{v'_{(q+2\Delta g)1}} &
\cdots & v'_{(q+2\Delta g)t_{q+2\Delta g}}\\
\theta({v'_{11}}) & \cdots & \boxed{\theta(v'_{1t_1})} & \theta(v'_{21}) & \cdots & \boxed{\theta(v'_{2t_2})} &
\cdots & {\theta(v'_{(q+2\Delta g)1})} & \cdots & \boxed{\theta(v'_{(q+2\Delta g)t_{q+2\Delta g}})}
\end{array}\right)
\end{align*}
By extending the pair $(\theta(v_{ik}),s'_v(v_{ik}))$ into the diagonal block which has $\theta(v_{ik})$
as the left lower corner and $s'_v(v_{ik})$ as the right upper corner, and
concatenating with the rest of faces in $\mathbb{E}$, we obtain
an embedding $\mathbb{E}'$ with $2\Delta g$ more faces than $\mathbb{E}$,
i.e., $g(\mathbb{E}')=g(\mathbb{E})+\Delta g$.
It can be shown that only the half edges around the vertex $v$ are cyclically arranged in different manners
in $\mathbb{E}$ and $\mathbb{E}'$, so each $\theta$ uniquely induces an embedding
$\mathbb{E}'$ which is obtained by rearranging the half edges around $v$ and
has genus $g(\mathbb{E}')=g(\mathbb{E})+\Delta g$.
This completes the proof. \qed

In fact, by similar reasoning, we can obtain a more general result.
Given an embedding $\mathbb{E}$ of the graph $G$ and one vertex $v$ of $G$,
let $deg(v)=k$ and assume there are $a_i$ faces of $\mathbb{E}$ where each contains $i$ half edges in $H(v)$.
We call $\mu=1^{a_1}2^{a_2}\cdots k^{a_k}$ the f-incidence degree distribution of $v$ w.r.t. $\mathbb{E}$.
Now how many embeddings $\mathbb{E}'$ where the f-incidence degree distribution of $v$ is $\eta$ can
be obtained by rearranging the half edges around $v$? we denote this number as $R_v(\eta)$.

\begin{theorem}\label{4thm2}
\begin{align}
R_v(\eta)= f_{\eta,\lambda(D_v)}(deg(v)).
\end{align}
\end{theorem}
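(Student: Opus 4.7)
The plan is to adapt the two-direction bijective argument of Theorem~\ref{4thm1} but to track the full cycle type of the new face permutation incident to $v$, rather than merely its number of cycles. Theorem~\ref{4thm1} already sets up all the needed machinery: each cyclic arrangement $\theta$ of the half edges at $v$ corresponds, via the diagonal-block analysis, to the situation in which the cycles of $D_v\circ\theta$ enumerate the new faces of the resulting embedding incident to $v$. I would use exactly this correspondence and simply upgrade ``number of cycles'' to ``cycle type.''

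First, I revisit the $(\Longrightarrow)$ direction of Theorem~\ref{4thm1}. Given a cyclic arrangement $\theta$ on $H(v)$ producing an embedding $\mathbb{E}'$, the diagonal-block argument there shows that for each new face $f'_i$ incident to $v$ the cycle $(v'_{i1}v'_{i2}\cdots v'_{it_i})$ of $D_v\circ\theta$ records precisely the sequence of half edges in $H(v)\cap H(f'_i)$ visited along $f'_i$. Consequently $t_i=|H(v)\cap H(f'_i)|$, and the multiset of lengths $\{t_1,\ldots,t_{q'}\}$ is by definition the f-incidence degree distribution of $v$ in $\mathbb{E}'$. Hence that distribution equals $\eta$ if and only if $D_v\circ\theta$ has cycle type $\eta$.

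Next, I would repackage the count in the language of plane permutations. Combining the observation above with the $(\Longleftarrow)$ direction of Theorem~\ref{4thm1}, the set of rearrangements around $v$ yielding f-incidence degree distribution $\eta$ is in bijection with the set of $\deg(v)$-cycles $\theta$ on $H(v)$ such that $D_v\circ\theta$ has cycle type $\eta$. Writing $\pi:=D_v\circ\theta$, the pair $(s,\pi):=(\theta,\pi)$ is a plane permutation on $[\deg(v)]$ whose diagonal is
\begin{equation*}
s\pi^{-1}=\theta(D_v\theta)^{-1}=D_v^{-1},
\end{equation*}
a permutation of cycle type $\lambda(D_v)$. Therefore
\begin{equation*}
R_v(\eta)=\bigl|\{(s,\pi)\in U_{D_v^{-1}}:\pi\text{ has cycle type }\eta\}\bigr|=f_{\eta,\lambda(D_v)}(\deg(v)),
\end{equation*}
the last equality using that $f_{\eta,\lambda}(n)$ depends only on cycle types, not on the specific choice of diagonal.

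Essentially all the hard work is already contained in Theorem~\ref{4thm1}; the only genuinely new step is refining cycle-count bookkeeping into cycle-type bookkeeping, which is immediate once the bijection is at hand. The main pitfall to guard against is convention-matching with Definition~\ref{2def1}: one must correctly identify the $n$-cycle $s$ with the new cyclic arrangement $\theta$, the arbitrary permutation $\pi$ with the ``new face'' permutation $D_v\circ\theta$, and the fixed diagonal with $D_v^{-1}$ rather than $D_v$. Once this identification is in place, the formula is a direct corollary of Theorem~\ref{4thm1}.
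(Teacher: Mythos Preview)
Your proposal is correct and is precisely the ``similar reasoning'' the paper alludes to: the paper gives no explicit proof of Theorem~\ref{4thm2} beyond the remark that it follows from the argument of Theorem~\ref{4thm1}, and you have carried out exactly that refinement, upgrading the cycle-count bookkeeping to cycle-type bookkeeping via the same diagonal-block bijection. Your identification of the plane permutation $(s,\pi)=(\theta,D_v\theta)$ with diagonal $D_v^{-1}$ (of type $\lambda(D_v)$) is the right way to match the conventions of Section~2, and the remark that $f_{\eta,\lambda}$ depends only on cycle types handles the passage from $D_v^{-1}$ to $\lambda(D_v)$.
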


As the first corollary of Theorem~\ref{4thm1}, we obtain the following (local) version of ``interpolation" theorem.
\begin{corollary}\label{4cor2}
Let $\mathbb{E}$ be an embedding of the graph $G$. If there is a vertex $v=(s_v, \pi_v)_q$,
then there exists an embedding $\mathbb{E}'$ of $G$ such that
$g(\mathbb{E}')=g(\mathbb{E})+\Delta g$ for any
$$
-\lfloor\frac{deg(v)+1-\ell(\lambda(D_v))-q}{2}\rfloor \leq \Delta g \leq \lfloor \frac{q-1}{2}\rfloor.
$$
In particular, if there is a vertex $v$ of $G$ which is incident to every face of $\mathbb{E}$,
then $G$ is upper embeddable.
\end{corollary}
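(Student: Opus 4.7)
The plan is to combine Theorem~\ref{4thm1}, Proposition~\ref{2pro1}, and an interpolation argument based on the merging transposes of Lemma~\ref{2lem2}.

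Set $n = deg(v)$ and $\lambda = \lambda(D_v)$. By Lemma~\ref{4lem1} only the faces incident to $v$ can change under the re-embedding, and Euler's formula yields $q' = q - 2\Delta g$, where $q'$ is the number of faces of $\mathbb{E}'$ incident to $v$. Theorem~\ref{4thm1} therefore reads
\[
R_v(\Delta g) = p^{\lambda}_{q-2\Delta g}(n),
\]
and the corollary reduces to showing that $p^{\lambda}_{k}(n) > 0$ for every integer $k$ of the correct parity in $[k_{\min},\, n+1-\ell(\lambda)]$, where $k_{\min}\in\{1,2\}$ has the same parity as $q$.

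Proposition~\ref{2pro1} supplies the upper endpoint $k = n+1-\ell(\lambda)$. To cover the intermediate values I would iterate a Case~$1$ transpose from Lemma~\ref{2lem2}: from any $(s,\pi)\in U_D$ with $\pi$ having $k\geq 3$ cycles, pick representatives $s_{i-1},s_j,s_l$ in three distinct $\pi$-cycles (in the cyclic order along $s$) and apply the transpose; the resulting element of $U_D$ has $\pi$-permutation with exactly $k-2$ cycles. Iterating, $p^{\lambda}_{k-2}(n),\,p^{\lambda}_{k-4}(n),\ldots$ remain positive all the way down to $k_{\min}$. Translating back through $\Delta g = (q-k)/2$, the two endpoints of the admissible $k$-range give $\Delta g = -\lfloor (n+1-\ell(\lambda) - q)/2\rfloor$ and $\Delta g = \lfloor (q-1)/2\rfloor$ respectively, the floor functions absorbing any parity offset.

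The main obstacle is verifying that a Case~$1$ transpose is always executable during the descent, i.e.~that three distinct $\pi$-cycles admitting representatives in the required cyclic order $0<i\leq j<l<n$ along $s$ can always be found once $\pi$ has at least three cycles. Since $s$ is a single $n$-cycle meeting every element, and a cyclic shift of the base point is available, such a triple exists for every $k \geq 3$, so the descent never stalls before reaching $k_{\min}$.

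For the ``in particular'' clause, if $v$ is incident to every face of $\mathbb{E}$ then $q = f = \beta(G) + 1 - 2g(\mathbb{E})$. Taking $\Delta g$ to be the upper endpoint of the corollary's range yields an embedding $\mathbb{E}'$ with
\[
g(\mathbb{E}') = g(\mathbb{E}) + \left\lfloor\frac{q-1}{2}\right\rfloor = g(\mathbb{E}) + \left\lfloor\frac{\beta(G) - 2g(\mathbb{E})}{2}\right\rfloor = \left\lfloor\frac{\beta(G)}{2}\right\rfloor,
\]
the maximum possible genus of any embedding of $G$, so $G$ is upper embeddable.
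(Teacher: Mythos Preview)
Your argument is correct and its skeleton matches the paper's: reduce via Theorem~\ref{4thm1} to the positivity of $p^{\lambda}_{k}(n)$ for all $k$ of the right parity up to $n+1-\ell(\lambda)$, invoke Proposition~\ref{2pro1} for the top endpoint, and then interpolate downward. The one genuine difference is the interpolation step. The paper appeals to Corollary~\ref{2cor1}: since every summand in the numerator of that recursion is nonnegative and the denominator $n+1-k-\ell(\lambda)$ is positive for $k<n+1-\ell(\lambda)$, the nonvanishing of any $p^{\lambda}_{k+2i}(n)$ forces $p^{\lambda}_{k}(n)\neq 0$. You instead use Lemma~\ref{2lem2} directly, producing an explicit Case~$1$ transpose that drops the cycle count by two at each stage. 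Your route is more constructive---it actually builds the lower-face embedding rather than inferring its existence from a counting identity---while the paper's route is shorter once Corollary~\ref{2cor1} is in hand. Your verification that a Case~$1$ triple is always available when $\pi$ has at least three cycles is fine (any three representatives, ordered by their positions in $s$, satisfy $0<i\le j<l<n$; the cyclic shift you mention is not even needed). Your treatment of the ``in particular'' clause, carrying the Euler--Betti arithmetic through to $\lfloor\beta(G)/2\rfloor$, is the explicit form of what the paper states as ``leads to an embedding with either $1$ or $2$ faces.''
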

\proof According to Corollary~\ref{2cor1}, $p^{\lambda}_k(n)\neq 0$ as long as $p^{\lambda}_{k+2i}(n)\neq 0$ for some $i>0$.
And from Proposition~\ref{2pro1}, $p^{\lambda}_{deg(v)+1-\ell(\lambda(D_v))}(deg(v))\neq 0$.
Therefore, for any
$$
-\lfloor\frac{deg(v)+1-\ell(\lambda(D_v))-q}{2}\rfloor \leq \Delta g \leq \lfloor \frac{q-1}{2}\rfloor,
$$
$p^{\lambda(D_v)}_{q-2\Delta g}(deg(v))\neq 0$. Namely, rearranging the half edges around
$v$ can lead to an embedding $\mathbb{E}'$ such that $\mathbb{E}'$ has $2\Delta g$ less faces.
Hence, $g(\mathbb{E}')=g(\mathbb{E})+ \Delta g$.
If there is a vertex $v$ of $G$ which is incident to every face of $\mathbb{E}$,
then rearranging the half edges around $v$ can lead to an embedding with either $1$ face
or $2$ faces, depending on the parity of the number of faces in $\mathbb{E}$.
Thus, $G$ is upper embeddable. \qed

In fact, the result in Corollary~\ref{4cor2} can be further optimized. Given two vertices,
if there is no face of the embedding $\mathbb{E}$ incident to both vertices,
or if there is only one face $f_0$ of the embedding $\mathbb{E}$ incident to both vertices
where all the half edges of one vertex contained
in $f_0$ are completely contained in a diagonal block determined by the other vertex,
the two vertices are called $\mathbb{E}$-facial disjoint.
Applying Lemma~\ref{4lem1} and diagonal blocks rearrangement argument,
if two vertices are $\mathbb{E}$-facial disjoint,
re-embedding them simultaneously will not interfere with each other.
Hence, we have

\begin{corollary}\label{4cor22}
Let $\mathbb{E}$ be an embedding of the graph $G$. If vertices $v_i=(s_{v_i}, \pi_{v_i})_{q_i}$, $1\leq i \leq m$,
are mutually $\mathbb{E}$-facial disjoint,
then there exists an embedding $\mathbb{E}'$ of $G$ such that
$g(\mathbb{E}')=g(\mathbb{E})+\Delta g$ for any
$$
\sum_{i=1}^m -\lfloor\frac{deg(v_i)+1-\ell(\lambda(D_{v_i}))-q_i}{2}\rfloor \leq \Delta g \leq \sum_{i=1}^m \lfloor \frac{q_i-1}{2}\rfloor.
$$
\end{corollary}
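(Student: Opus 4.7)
The plan is to proceed by induction on $m$. The base case $m=1$ is precisely Corollary~\ref{4cor2}. For the inductive step, I would fix any target $\Delta g$ in the stated range and decompose it as $\Delta g = \Delta g_1 + \Delta g^{*}$, where $\Delta g_1$ lies in the single-vertex range of Corollary~\ref{4cor2} applied to $v_1$, and $\Delta g^{*}$ lies in the inductive range for $v_2,\ldots,v_m$; such a decomposition exists because each range is an interval of integers, so their Minkowski sum is the full interval of integer sums. Applying Corollary~\ref{4cor2} to $v_1$ then produces an embedding $\mathbb{E}_1$ with $g(\mathbb{E}_1) = g(\mathbb{E}) + \Delta g_1$, obtained by rearranging only the half edges around $v_1$.

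The crux would be the following technical claim about $\mathbb{E}_1$: (a) for each $j \geq 2$, the $q_j$-cyc plane permutation $(s_{v_j},\pi_{v_j})_{q_j}$ of $v_j$ is unchanged, so that $deg(v_j)$, $q_j$ and the cycle type of $D_{v_j}$ all agree with their counterparts in $\mathbb{E}$; and (b) the vertices $v_2,\ldots,v_m$ remain pairwise $\mathbb{E}_1$-facial disjoint. Granting (a) and (b), the induction hypothesis applied to $v_2,\ldots,v_m$ inside $\mathbb{E}_1$ would yield an embedding $\mathbb{E}'$ with $g(\mathbb{E}') = g(\mathbb{E}_1) + \Delta g^{*} = g(\mathbb{E}) + \Delta g$, closing the induction.

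For claim (a), I would combine Lemma~\ref{4lem1} with the diagonal-block picture underlying Theorem~\ref{4thm1}. The permutation $\pi$ is altered only on $H(v_1)$, so $\pi_{v_j}$ is automatically preserved. To see that $s_{v_j}$ is preserved, I would split into cases: if $v_1$ and $v_j$ share no face, every $v_j$-incident face is untouched by the re-embedding of $v_1$; if they share a unique face $f_0^{(j)}$, a short symmetry argument reduces the two orientations of the diagonal-block condition in the definition of $\mathbb{E}$-facial disjointness to a single statement, so one may assume all $v_j$-half edges of $f_0^{(j)}$ lie in one $v_1$-diagonal block $B_j$. Since re-embedding $v_1$ permutes its diagonal blocks as indivisible units, $B_j$ lands inside a single new face of $\mathbb{E}_1$ with its internal cyclic order of $v_j$-half edges intact. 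This induces a bijection between $v_j$-incident faces in $\mathbb{E}$ and in $\mathbb{E}_1$ under which $s_{v_j}$ is preserved.

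The hard part will be (b): ruling out that the re-embedding of $v_1$ creates undesired new facial coincidences among $v_2,\ldots,v_m$. My plan is to use the same diagonal-block bookkeeping. If two blocks $B_j$ and $B_k$ (for $j,k \geq 2$) are transported into the same new face $F^{*}$ of $\mathbb{E}_1$, then the $v_j$-half edges in $F^{*}$ form the contiguous arc $B_j \cap H(v_j)$, which sits inside the large $v_k$-diagonal block of $F^{*}$ that wraps around outside $B_k$; hence the diagonal-block condition for this new shared face is automatic. Combined with the observation that each $v_j$ inherits at most one new face in $\mathbb{E}_1$ via $B_j$ while all other $v_j$-faces persist unchanged from $\mathbb{E}$, a case analysis against the preexisting $\mathbb{E}$-facial disjoint hypothesis should show that no pair $v_j,v_k$ ever ends up sharing more than one face in $\mathbb{E}_1$. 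Verifying this last piece of bookkeeping, especially in the subcase where $v_j$ and $v_k$ already share an unchanged face in $\mathbb{E}$ and $B_j,B_k$ land together in $\mathbb{E}_1$, is the most delicate step I anticipate.
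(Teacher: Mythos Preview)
Your inductive scheme differs from the paper's argument, which re-embeds all the $v_i$ simultaneously and simply asserts that, by mutual $\mathbb{E}$-facial disjointness, the individual genus increments from Corollary~\ref{4cor2} do not interfere and therefore add; the paper never passes through an intermediate embedding $\mathbb{E}_1$ and so never needs your claim~(b).

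That matters because claim~(b) can fail. Take $v_1,v_2,v_3$ pairwise $\mathbb{E}$-facial disjoint with $v_1,v_2$ sharing only a face $f_A$, with $v_1,v_3$ sharing only $f_B$, and with $v_2,v_3$ sharing only a face $f_C$ not incident to $v_1$; give $v_1$ exactly one further incident face $f_D$, so that $q_1=3$. When your decomposition forces $\Delta g_1=\lfloor (q_1-1)/2\rfloor=1$, every re-embedding of $v_1$ realizing this value collapses all three $v_1$-incident faces into a single face $F^{\ast}$, so the blocks $B_2\subset f_A$ and $B_3\subset f_B$ both land in $F^{\ast}$. In $\mathbb{E}_1$ the pair $v_2,v_3$ then shares two faces, $F^{\ast}$ and the untouched $f_C$, hence is no longer $\mathbb{E}_1$-facial disjoint, and the induction hypothesis does not apply. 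This is exactly the subcase you flagged as most delicate, and it does not go through. A repair is either to argue simultaneously as the paper does, or to carry a weaker inductive invariant that drops the ``at most one shared face'' clause and keeps only the per-face diagonal-block containment; your own claim~(a) analysis already shows that this weaker property is what is actually needed to preserve each $(s_{v_j},\pi_{v_j})_{q_j}$, though verifying that it propagates through every step is still more bookkeeping than the simultaneous route requires.
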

\proof Since $v_i$ are mutually $\mathbb{E}$-facial disjoint, the range of genus difference achieved by
re-embedding $v_i$ do not interfere with each other. Therefore, the differences can be combined together
whence the corollary. \qed

\begin{corollary}\label{4cor3}
Let $\mathbb{E}$ be an embedding of the graph $G$ with genus $g_{max}(G)$.
Then, every vertex is incident to at most $2$ faces.
\end{corollary}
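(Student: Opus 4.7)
The proof will be by contradiction, and it is a short direct application of Corollary~\ref{4cor2}. The plan is to assume that there exists a vertex $v$ of $G$ incident to $q\geq 3$ faces in the putatively maximum-genus embedding $\mathbb{E}$, and then to exhibit a re-embedding around $v$ whose genus strictly exceeds $g(\mathbb{E})=g_{max}(G)$, which is impossible.

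First I would recall the setup: by Corollary~\ref{4cor2}, if $v=(s_v,\pi_v)_q$ is a vertex of $G$ incident to $q$ faces of $\mathbb{E}$, then for every integer $\Delta g$ with
\[
-\Big\lfloor\tfrac{deg(v)+1-\ell(\lambda(D_v))-q}{2}\Big\rfloor \;\leq\; \Delta g \;\leq\; \Big\lfloor\tfrac{q-1}{2}\Big\rfloor,
\]
there is an embedding $\mathbb{E}'$ of $G$ with $g(\mathbb{E}')=g(\mathbb{E})+\Delta g$, obtained by rearranging the half edges around $v$. In particular, the upper bound on the attainable $\Delta g$ depends only on the number $q$ of faces incident to $v$.

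Now suppose, for contradiction, that some vertex $v$ of $G$ is incident to $q\geq 3$ faces of $\mathbb{E}$. Then $\lfloor(q-1)/2\rfloor\geq 1$, so the value $\Delta g=1$ lies within the admissible range of Corollary~\ref{4cor2}. Applying the corollary, I obtain an embedding $\mathbb{E}'$ of $G$ with
\[
g(\mathbb{E}')=g(\mathbb{E})+1=g_{max}(G)+1,
\]
contradicting the definition of $g_{max}(G)$. Hence every vertex of $G$ is incident to at most two faces of $\mathbb{E}$.

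I do not expect a real obstacle here; the result is essentially immediate from the interpolation statement in Corollary~\ref{4cor2}. The one point that warrants attention is verifying that $\Delta g=1$ is actually \emph{achievable} (not just that the upper bound is at least $1$), but this is exactly what Corollary~\ref{4cor2} guarantees, by virtue of Corollary~\ref{2cor1} and Proposition~\ref{2pro1} ensuring non-vanishing of the relevant $p_k^{\lambda}(n)$ throughout the stated interval. No further case analysis on the parity of $q$ or on the cycle-type $\lambda(D_v)$ is required.
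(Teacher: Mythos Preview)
Your proof is correct and follows essentially the same approach as the paper's: both argue by contradiction that a vertex incident to at least three faces permits a re-embedding with strictly higher genus. The only cosmetic difference is that you invoke the packaged interpolation statement Corollary~\ref{4cor2}, whereas the paper cites Corollary~\ref{2cor1} directly (implicitly together with Theorem~\ref{4thm1}); these amount to the same argument.
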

\proof If there is a vertex $v$ with no less than $3$ faces, according to Corollary~\ref{2cor1},
there exists an embedding with $2$ less faces. Hence, $\mathbb{E}$ can not be
an embedding of the graph $G$ with the maximum genus $g_{max}(G)$.\qed

The fact that if there exists a vertex incident to at least $3$ faces in an embedding, an embedding
with higher genus always exists has been well known in the literature, e.g., in~\cite{xuong2,martin}.
In particular, Corollary~\ref{4cor3} is the same as a very recent result in~\cite{martin} where
locally maximal embedding
is studied. In our context, a locally maximal embedding can be defined as an embedding from which
a higher genus embedding can not be obtained by rearranging the half edges around one of the vertices.
Then, we can actually restate that if an embedding is locally maximal,
then every vertex is incident to at most $2$ faces.

However, to the best of our knowledge, it seems there is no simple characterization to determine
if there exists a lower genus embedding based on a given embedding. Thus, the lower bound of $\Delta g$
in Corollary~\ref{4cor2} may be the first simple characterization. Furthermore, we obtain the
following necessary condition for an embedding of $G$ to be an embedding with the minimum genus.
We mention that there is a sufficient condition for an embedding of $G$ to be an embedding with the minimum genus
in Thomassen~\cite{thoma2}.

\begin{corollary}\label{4cor4}
Let $\mathbb{E}$ be an embedding of the graph $G$ with genus $g_{min}(G)$,
and a vertex $v=(s_v, \pi_v)_{q_v}$. Then,
\begin{align}
\ell(\lambda(D_v))+q_v=deg(v)+1.
\end{align}
\end{corollary}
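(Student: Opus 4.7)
The plan is to trap the integer $N := \deg(v)+1-\ell(\lambda(D_v))-q_v$ between $0$ and $1$ while simultaneously showing that $N$ is even, forcing $N=0$. Three ingredients are at hand: Proposition~\ref{2pro1} (giving the maximal number of cycles in a factorization of $D_v$), Corollary~\ref{4cor2} (the local interpolation for re-embedding $v$), and a parity check on signs in $\mathcal{S}_{\deg(v)}$.

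First, I would use Proposition~\ref{2pro1} to show $N \geq 0$. The vertex data furnishes an explicit factorization $D_v = s_v\circ \pi_v^{-1}$ in $\mathcal{S}_{\deg(v)}$, where $s_v$ has exactly $q_v$ cycles and $\pi_v^{-1}$ is a single $\deg(v)$-cycle. Thus $p_{q_v}^{\lambda(D_v)}(\deg(v)) \geq 1$, and Proposition~\ref{2pro1} then forces $q_v \leq \deg(v)+1-\ell(\lambda(D_v))$, i.e.~$N\geq 0$.

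Next, I would use Corollary~\ref{4cor2} to show $N\leq 1$. Since $\mathbb{E}$ attains the minimum genus $g_{min}(G)$, no re-embedding of the single vertex $v$ may yield an embedding of strictly smaller genus. Hence the lower endpoint in Corollary~\ref{4cor2} must be non-negative:
$$-\left\lfloor \frac{\deg(v)+1-\ell(\lambda(D_v))-q_v}{2}\right\rfloor \geq 0,$$
i.e.~$\lfloor N/2\rfloor \leq 0$, and since $N$ is an integer this reads $N\leq 1$.

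Finally, parity. Taking signs in the identity $D_v = s_v\circ \pi_v^{-1}$ inside $\mathcal{S}_{\deg(v)}$ and using $\mathrm{sgn}(\sigma)=(-1)^{n-\ell(\sigma)}$ for a permutation on $n$ letters gives
$$\deg(v)-\ell(\lambda(D_v)) \equiv (\deg(v)-q_v) + (\deg(v)-1) \pmod{2},$$
which rearranges to $N\equiv 0\pmod 2$. Combining the three bullets, $N$ is a non-negative even integer with $N\leq 1$, so $N=0$ and the identity $\ell(\lambda(D_v))+q_v = \deg(v)+1$ follows. The subtlest point is not any single step but remembering that the bound from Proposition~\ref{2pro1} is \emph{only one side} of an equality statement; without the parity observation one would be stuck with $\deg(v) \leq \ell(\lambda(D_v))+q_v \leq \deg(v)+1$ and could not rule out $N=1$.
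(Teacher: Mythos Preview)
Your proof is correct and follows the same route as the paper, only with considerably more care. The paper's one-sentence argument (``otherwise we can increase the number of faces and lower the genus, contradiction'') is really an appeal to Corollary~\ref{4cor2}, which is exactly your second bullet; your first bullet ($N\geq 0$ via Proposition~\ref{2pro1}) and your parity check are the two facts the paper silently assumes in order to pass from ``$N\neq 0$'' to ``$\lfloor N/2\rfloor\geq 1$''. In particular, your remark that without the parity observation one would be stuck at $N\in\{0,1\}$ is exactly the gap the paper's terse proof leaves implicit (it is covered in the paper's framework by Lemma~\ref{4lem1}, which gives $q\equiv q'\pmod 2$, equivalent to your sign computation).
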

\proof Otherwise, we can increase the number of faces by rearranging the half edges
around $v$ so that we obtain an embedding with even lower genus, which
contradict the fact that $\mathbb{E}$ is an embedding of the graph $G$ with genus $g_{min}(G)$. \qed

Surely, we can define locally minimal embedding analogously. So, we obtain that
if an embedding is locally minimal, then, for every vertex $v$,
\begin{align}
\ell(\lambda(D_v))+q_v=deg(v)+1.
\end{align}
It is also obvious that if the embedding $\mathbb{E}$ is locally minimal and all vertices of $G$ are mutually
$\mathbb{E}$-facial disjoint, then the genus of $\mathbb{E}$ is equal to $g_{min}(G)$.

In addition, Corollary~\ref{4cor2} provides an easy approach to estimate the range $[g_{min}(G), g_{max}(G)]$
for a given graph $G$: we can randomly try several embeddings of $G$, and from each embedding, we obtain an estimate
for $[g_{min}(G), g_{max}(G)]$ using the following theorem, and finally combine these estimates.

\begin{theorem}\label{4thm3}
Let $\mathbb{E}$ be an embedding of the graph $G$ and for a vertex $v$ of $G$, assume $v=(s_v, \pi_v)_{q_v}$.
Let
$$
T_1=\min_v \{-\lfloor\frac{deg(v)+1-\ell(\lambda(D_v))-q_v}{2}\rfloor\}, \quad T_2=\max_v \{\lfloor \frac{q_v-1}{2}\rfloor\}.
$$
Then, we have
\begin{align}
g_{min}(G)\leq g(\mathbb{E})+T_1 \leq g(\mathbb{E})+T_2 \leq g_{max}(G).
\end{align}
\end{theorem}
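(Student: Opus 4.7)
The claim is, at its core, a global repackaging of the local interpolation result Corollary~\ref{4cor2}, together with the observation that the admissible interval for the genus shift around any individual vertex always contains $0$.

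First I would apply Corollary~\ref{4cor2} to a vertex $v^-$ attaining the minimum that defines $T_1$. The admissible range of $\Delta g$ at $v^-$ includes its left endpoint, so there is an embedding $\mathbb{E}^-$ of $G$ obtained from $\mathbb{E}$ by rearranging the half edges around $v^-$ with
\[
g(\mathbb{E}^-) \;=\; g(\mathbb{E}) - \Bigl\lfloor \tfrac{deg(v^-)+1-\ell(\lambda(D_{v^-}))-q_{v^-}}{2}\Bigr\rfloor \;=\; g(\mathbb{E})+T_1.
\]
Since $g_{min}(G)\leq g(\mathbb{E}^-)$ by definition, this yields the left inequality $g_{min}(G)\leq g(\mathbb{E})+T_1$. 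An entirely symmetric argument at a vertex $v^+$ attaining the maximum in $T_2$ produces an embedding $\mathbb{E}^+$ with $g(\mathbb{E}^+)=g(\mathbb{E})+T_2$, hence the right inequality $g(\mathbb{E})+T_2 \leq g_{max}(G)$.

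The only remaining step is the middle inequality $T_1 \leq T_2$. For this I would observe that for every vertex $v$ the trivial rearrangement (leave $\pi_v$ unchanged) reproduces $\mathbb{E}$ itself, so $\Delta g=0$ must lie in the Corollary~\ref{4cor2} interval for $v$. This forces
\[
-\Bigl\lfloor \tfrac{deg(v)+1-\ell(\lambda(D_v))-q_v}{2}\Bigr\rfloor \;\leq\; 0 \;\leq\; \Bigl\lfloor \tfrac{q_v-1}{2}\Bigr\rfloor
\]
for every $v$. Taking the minimum over $v$ of the left side and the maximum over $v$ of the right side yields $T_1\leq 0\leq T_2$, and the middle inequality follows. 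If one wishes to avoid using $\Delta g=0 \in$~range, the same conclusion can be reached directly: the triple $(s_v,D_v,\pi_v)$ is a connected hypermap on $[deg(v)]$ (connectedness is automatic since $\pi_v$ is a single cycle, hence transitive), so the hypermap Euler identity gives $q_v+\ell(\lambda(D_v))+1=deg(v)+2-2g_v$ for some local genus $g_v\geq 0$, whence $deg(v)+1-\ell(\lambda(D_v))-q_v\geq 0$; combined with $q_v\geq 1$ this again delivers $T_1\leq 0\leq T_2$.

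There is no real obstacle here once Corollary~\ref{4cor2} is available; the argument is little more than picking the right vertex for each bound and glueing the resulting genus shifts onto $g(\mathbb{E})$. The only subtlety is justifying $T_1\leq T_2$, and the cleanest route is the hypermap Euler identity applied to the local triple $(s_v,D_v,\pi_v)$.
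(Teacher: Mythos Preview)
Your proposal is correct and follows the route the paper clearly intends: the paper gives no explicit proof of this theorem, presenting it as an immediate consequence of Corollary~\ref{4cor2}, and your argument does exactly that by applying the corollary at extremal vertices for $T_1$ and $T_2$. Your additional care in justifying the middle inequality $T_1\le 0\le T_2$ (via either the trivial rearrangement or the hypermap Euler relation $q_v+\ell(\lambda(D_v))+1\le deg(v)+2$, which is essentially Proposition~\ref{2pro1}) fills in a point the paper leaves implicit.
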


We remark that this approach to estimate the genus range can be optimized in the
similar manner as in Corollary~\ref{4cor22}, i.e., the minimum (resp. maximum) can
take over all $\mathbb{E}$-facial disjoint covers, where a $\mathbb{E}$-facial disjoint cover
is a set of mutually $\mathbb{E}$-facial disjoint vertices such that the union of
their incident faces is the set of all faces of $\mathbb{E}$.
Additionally, it may be possible to obtain a more efficient procedure to determine $[g_{min}(G),g_{max}(G)]$
if we combine our approach here and other algorithms to generate an embedding (if there is) of $G$ on a given 
surface of genus $g$, e.g., the linear algorithm in~\cite{kmr}. A rough idea could be as follows: suppose we know that 
$0\leq g_{min}(G)\leq a \leq b \leq g_{max}(G)\leq \lfloor\frac{\beta(G)}{2}\rfloor$. We can next choose a number
in $[0,a)$ or $(b,\lfloor\frac{\beta(G)}{2}\rfloor]$, say $k$. Then, apply the linear algorithm to 
generate an embedding of $G$ on $S_k$ and extend the range $[a,b]$ based on the obtained embedding by our approach. 
If there is no embedding on $S_k$, we can update the outer bound, i.e., $[0,\lfloor\frac{\beta(G)}{2}\rfloor]$.
Iterating this procedure, we can eventually obtain $[g_{min}(G),g_{max}(G)]$.

In the following, we present an analogue of Case~$3$ (and Case~$4,5,6$), Case~$1$ and Case~$2$ in Lemma~\ref{2lem2},
which increases the genus by $0$,~$1$ and $-1$, respectively.
In the derivation, a kind of local Poincar\'{e} dual is applied.

\begin{proposition}\label{4pro1}
Let $\mathbb{E}$ be an embedding of the graph $G$ and a vertex $v=(s_v,\pi_v)_q$, where
$$
\pi_v=(s_{i-1},v_1^i,\ldots v_{m_i}^i, s_j, v_1^j, \ldots v_{m_j}^j, s_l, v_1^l,\ldots v_{m_l}^l).
$$
If in $\mathbb{E}$, there exists a face of the form
$
(s_{i-1},\ldots s_j, \ldots s_l, \ldots),
$
or two faces of the form
$$
(s_{i-1},\ldots s_j, \ldots)(s_l, \ldots),
$$
then rearranging $H(v)$ according to the cyclic order
$$
(s_{i-1},v_1^j,\ldots v_{m_j}^j, s_l, v_1^i,\ldots v_{m_i}^i, s_j, v_1^l, \ldots v_{m_l}^l)
$$
will lead to the embedding $\mathbb{E}'$ with $g(\mathbb{E}')=g(\mathbb{E})$.
\end{proposition}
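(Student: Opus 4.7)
The plan is to realize the prescribed rearrangement $\theta$ as a transpose in the sense of Lemma~\ref{2lem2} applied to a local Poincar\'e dual of the vertex $v$, and then deduce the face--count preservation directly from Case~$3$ and Case~$4$ of that lemma.

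First, I would dualize locally. Since $\pi_v$ is a single $deg(v)$-cycle on $H(v)$, the pair $(\pi_v,s_v)$ is itself a plane permutation in the sense of Definition~\ref{2def1}, with diagonal $\pi_v\circ s_v^{-1}=D_v^{-1}$. Writing the top cycle $\pi_v$ linearly starting from $s_{i-1}$, the two consecutive blocks $[v_1^i,\ldots,v_{m_i}^i,s_j]$ and $[v_1^j,\ldots,v_{m_j}^j,s_l]$ occupy positions $[1,m_i+1]$ and $[m_i+2,m_i+m_j+2]$. Taking the transpose $\chi_h$ with $h=(1,\,m_i+1,\,m_i+2,\,m_i+m_j+2)$ swaps these two blocks, so the new top-row cycle is exactly $\pi_v^h=\theta$; the three critical positions selected by this $h$ carry the elements $s_{i-1}$, $s_j$, $s_l$.

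Next I would invoke Lemma~\ref{2lem2}, now with $s_v$ playing the role of the permutation ``$\pi$'' in the dual. The first hypothesis of the proposition---that $\mathbb{E}$ contains a face through $s_{i-1},s_j,s_l$ in the cyclic order $(s_{i-1},\ldots,s_j,\ldots,s_l,\ldots)$---matches the shape of Case~$3$, whose conclusion is that $s_v^h$ still has a single cycle through the three critical elements. The second hypothesis---two faces $(s_{i-1},\ldots,s_j,\ldots)(s_l,\ldots)$---matches the shape of Case~$4$, whose conclusion is that $s_v^h$ still has two cycles through these elements. Either way, only the $s_v$-cycle(s) through the critical elements are affected, so the total number of cycles of $s_v^h$ equals that of $s_v$, namely $q$.

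To translate back, I would use that transposes preserve the diagonal: $s_v^h(\pi_v^h)^{-1}=s_v\pi_v^{-1}=D_v$, hence $s_v^h=D_v\circ\theta$. By the proof of Theorem~\ref{4thm1}, the faces of the rearranged embedding $\mathbb{E}'$ incident to $v$ correspond bijectively to the cycles of $D_v\circ\theta=s_v^h$, so $v$ is incident to exactly $q$ faces in $\mathbb{E}'$. Faces of $\mathbb{E}$ not incident to $v$ are untouched by a local rearrangement at $v$ (cf.\ Lemma~\ref{4lem1} and the proof of Theorem~\ref{3thm1}), so the total face count---and hence the genus---is preserved. The main obstacle is the bookkeeping in the first two steps: setting up the local dual and verifying that the transpose $\chi_h$ with the specified parameters produces exactly $\theta$, and matching the two face patterns with Cases~$3$ and $4$ of Lemma~\ref{2lem2} under the ``$\pi \leftrightarrow s_v$'' swap. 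Once this dictionary is in place, cycle-count preservation is immediate from the case analysis and Theorem~\ref{4thm1} closes the argument.
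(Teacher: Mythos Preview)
Your proposal is correct and follows the same approach as the paper: pass to the local Poincar\'e dual $(\pi_v,s_v)$, realize the rearrangement $\theta$ as a transpose of the top row, and read off cycle-count preservation for $s_v$ from Lemma~\ref{2lem2}. Your write-up is in fact more explicit than the paper's (you specify the transpose parameters and spell out $s_v^h=D_v\circ\theta$ before appealing to Theorem~\ref{4thm1}); the only cosmetic difference is that the paper cites ``Case~$3$ or one of Cases~$4$, $5$, $6$'' for the two hypotheses, whereas you correctly pin down the second hypothesis as Case~$4$ specifically.
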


\proof Since $v=(s_v, \pi_v)_q$, we have $s_v=D_v \circ \pi_v$, where $s_v$ has
$q$ cycles and $\pi_v$ has only one cycle. This is equivalent to
$\pi_v =D_v^{-1} \circ s_v$ which corresponds to a plane permutation $(\pi_v, s_v)$ with
diagonal $D_v^{-1}$, i.e., a kind of local Poincar\'{e} dual.
Now the given conditions in the proposition either agree with
Case~$3$ or one of $\{\mbox{Case~$4$},\mbox{Case~$5$}, \mbox{Case~$6$} \}$ in Lemma~\ref{2lem2}.
Namely, if we transpose $\pi_v$ into
$$
(s_{i-1},v_1^j,\ldots v_{m_j}^j, s_l, v_1^i,\ldots v_{m_i}^i, s_j, v_1^l, \ldots v_{m_l}^l),
$$
we obtain a new plane permutation $(\pi'_v, s'_v)$ where the number of cycles in $s'_v$ equals
to the number of cycles in $s_v$. That is, rearranging $H(v)$ according to the cyclic order
$$
(s_{i-1},v_1^j,\ldots v_{m_j}^j, s_l, v_1^i,\ldots v_{m_i}^i, s_j, v_1^l, \ldots v_{m_l}^l)
$$
will not change the number of faces of the embedding. Therefore,
the resulting embedding $\mathbb{E}'$ satisfies $g(\mathbb{E}')=g(\mathbb{E})$.\qed

\begin{proposition}\label{4pro2}
Let $\mathbb{E}$ be an embedding of the graph $G$ and a vertex $v=(s_v,\pi_v)_q$, where
$$
\pi_v=(s_{i-1},v_1^i,\ldots v_{m_i}^i, s_j, v_1^j, \ldots v_{m_j}^j, s_l, v_1^l,\ldots v_{m_l}^l).
$$
If $s_{i-1}$,~$s_j$ and $s_l$ are contained respectively in three faces in $\mathbb{E}$,
then rearranging $H(v)$ according to the cyclic order
$$
(s_{i-1},v_1^j,\ldots v_{m_j}^j, s_l, v_1^i,\ldots v_{m_i}^i, s_j, v_1^l, \ldots v_{m_l}^l)
$$
will lead to the embedding $\mathbb{E}'$ with $g(\mathbb{E}')=g(\mathbb{E})+1$.
\end{proposition}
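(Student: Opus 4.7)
The plan is to mirror the argument used for Proposition~\ref{4pro1} via the local Poincar\'{e} dual, but now invoking Case~$1$ of Lemma~\ref{2lem2} instead of Cases~$3$--$6$. Since $v=(s_v,\pi_v)_q$ satisfies $s_v=D_v\circ \pi_v$, we have equivalently $\pi_v=D_v^{-1}\circ s_v$, so that $(\pi_v,s_v)$ is a plane permutation on $H(v)$ with diagonal $D_v^{-1}$. In this dual setup $\pi_v$ plays the role of the $n$-cycle (the ``$s$'') while $s_v$ plays the role of the second permutation (the ``$\pi$''), and the cycles of $s_v$ are in bijection with the $q$ faces of $\mathbb{E}$ incident to $v$.

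Next, I would apply the transpose $\chi_h$ with $h=(i,j,j+1,l)$ to $(\pi_v,s_v)$. By the block-transposition description just after Lemma~\ref{2lem1}, the new $n$-cycle $\pi_v^h$ is obtained from $\pi_v$ by swapping the blocks $[v_1^i,\ldots,v_{m_i}^i,s_j,v_1^j,\ldots,v_{m_j}^j]$ and nothing surrounding $s_l$ in the right way, and a direct computation (identical to the one used in Prop.~\ref{4pro1}) shows
\[
\pi_v^h=(s_{i-1},v_1^j,\ldots v_{m_j}^j, s_l, v_1^i,\ldots v_{m_i}^i, s_j, v_1^l, \ldots v_{m_l}^l),
\]
which is exactly the prescribed new cyclic order around $v$. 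Simultaneously, by Lemma~\ref{2lem1} the new second permutation $s_v^h$ differs from $s_v$ only at the three images $s_{i-1},s_j,s_l$, with the substitutions $s_v^h(s_{i-1})=s_v(s_j)$, $s_v^h(s_j)=s_v(s_l)$, $s_v^h(s_l)=s_v(s_{i-1})$.

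Now I would invoke the hypothesis that $s_{i-1}, s_j, s_l$ lie in three distinct faces of $\mathbb{E}$, which means they belong to three distinct cycles of $s_v$. This is precisely the configuration of Case~$1$ in Lemma~\ref{2lem2} (with the roles of ``$\pi$'' and ``$\pi^h$'' played by $s_v$ and $s_v^h$). Consequently, the three cycles of $s_v$ containing these elements merge into a single cycle of $s_v^h$, while all other cycles are unchanged. Hence $s_v^h$ has exactly $q-2$ cycles.

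Finally, by Theorem~\ref{4thm1} (or directly by the face-tracking argument in Lemma~\ref{4lem1}), the embedding $\mathbb{E}'$ obtained from $\mathbb{E}$ by rearranging $H(v)$ according to $\pi_v^h$ has its faces incident to $v$ reduced from $q$ to $q-2$, while faces disjoint from $v$ are preserved. So the total face count drops by $2$, and Euler's formula $2g=\beta(G)+1-f$ yields $g(\mathbb{E}')=g(\mathbb{E})+1$. The only technical point to be careful about is verifying that the dual transpose $\chi_h$ indeed produces the stated cyclic ordering of $\pi_v^h$; once that indexing check is done, Lemma~\ref{2lem2} Case~$1$ does all the heavy lifting, so I do not anticipate further obstacles.
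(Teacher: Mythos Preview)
Your proposal is correct and follows exactly the paper's approach: apply the local Poincar\'{e} dual to view $(\pi_v,s_v)$ as a plane permutation with diagonal $D_v^{-1}$, then observe that the hypothesis ``$s_{i-1},s_j,s_l$ lie in three distinct faces'' places us in Case~$1$ of Lemma~\ref{2lem2}, so the three $s_v$-cycles merge into one and the face count drops by two. The paper's own proof is a one-line reference to this same reduction; your write-up simply unpacks the details (and your caveat about checking the indexing of $h$ is well-placed, since the symbols $i,j,l$ in the proposition are element labels rather than positions in $\pi_v$).
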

\proof After applying the ``local Poincar\'{e} dual", the given conditions in the proposition agree with
Case~$1$ in Lemma~\ref{2lem2} whence the proposition. \qed

\begin{proposition}\label{4pro3}
Let $\mathbb{E}$ be an embedding of the graph $G$ and a vertex $v=(s_v,\pi_v)_q$, where
$$
\pi_v=(s_{i-1},v_1^i,\ldots v_{m_i}^i, s_j, v_1^j, \ldots v_{m_j}^j, s_l, v_1^l,\ldots v_{m_l}^l).
$$
If in $\mathbb{E}$, there exists a face of the form
$
(s_{i-1},\ldots s_l, \ldots s_j, \ldots),
$
then rearranging $H(v)$ according to the cyclic order
$$
(s_{i-1},v_1^j,\ldots v_{m_j}^j, s_l, v_1^i,\ldots v_{m_i}^i, s_j, v_1^l, \ldots v_{m_l}^l)
$$
will lead to the embedding $\mathbb{E}'$ with $g(\mathbb{E}')=g(\mathbb{E})-1$.
\end{proposition}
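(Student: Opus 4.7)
The plan is to mirror the proofs of Propositions~\ref{4pro1} and~\ref{4pro2}: apply the local Poincar\'e dual to recast the re-embedding as a single transpose of a plane permutation, and then read off the effect on the face count from the appropriate case of Lemma~\ref{2lem2}. Since the goal here is to \emph{decrease} the genus by $1$, equation~(1.1) tells us that we need to \emph{increase} the face count by exactly $2$; and Case~$2$ of Lemma~\ref{2lem2} is precisely the case where a single $\pi$-cycle splits into three. So Case~$2$ is the target.

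First I would perform the dual: rewriting $s_v = D_v \pi_v$ as $\pi_v = D_v^{-1} s_v$, the pair $(\pi_v, s_v)$ becomes a plane permutation on $H(v)$ with $\pi_v$ as the $\deg(v)$-cycle and $s_v$ as the cycle-counting permutation — its cycles being in bijection with the faces of $\mathbb{E}$ incident to $v$. Under this dual, the hypothesis ``there is a face $(s_{i-1}, \ldots, s_l, \ldots, s_j, \ldots)$'' translates verbatim into ``one cycle of $s_v$ contains $s_{i-1}, s_l, s_j$ in this cyclic order'', which is exactly the $\pi$-cycle structure of Case~$2$ of Lemma~\ref{2lem2} (after renaming $(s,\pi) \leftrightarrow (\pi_v, s_v)$). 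Next, the prescribed new rotation
\[
(s_{i-1}, v_1^j, \ldots, v_{m_j}^j, s_l, v_1^i, \ldots, v_{m_i}^i, s_j, v_1^l, \ldots, v_{m_l}^l)
\]
is the image of $\pi_v$ under the transpose $\chi_h$ with $h=(i,j,j+1,l)$. Applying Case~$2$ then yields that this one cycle of $s_v$ splits into three disjoint cycles, while every other cycle of $s_v$ stays fixed (only the $s_v$-images of the three critical points move, by Lemma~\ref{2lem1}). The face count therefore grows by $2$, and $g(\mathbb{E}') = g(\mathbb{E}) - 1$.

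The main obstacle is essentially bookkeeping: one must verify carefully that the local Poincar\'e dual really does convert the global re-embedding operation on $\mathbb{E}$ into the transpose $\chi_h$ on the dual plane permutation $(\pi_v, s_v)$, and that no cycle of $s_v$ outside the one singled out by the hypothesis is altered by $\chi_h$. This is precisely the verification already carried out (in the sibling cases) in Propositions~\ref{4pro1} and~\ref{4pro2}, so the argument is a direct mimic of those; no new ingredients beyond Lemma~\ref{2lem2} and equation~(1.1) are needed.
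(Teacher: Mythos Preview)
Your proposal is correct and follows exactly the same approach as the paper: apply the local Poincar\'e dual to view $(\pi_v,s_v)$ as a plane permutation, identify the hypothesis with the cycle pattern of Case~$2$ in Lemma~\ref{2lem2}, and conclude that the face count increases by $2$. The paper's own proof is a one-line version of what you wrote.
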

\proof After applying the ``local Poincar\'{e} dual", the given conditions in the proposition agree with
Case~$2$ in Lemma~\ref{2lem2} whence the proposition. \qed

\subsection*{Acknowledgements}

We acknowledge the financial support of the Future and Emerging
Technologies (FET) programme within the Seventh Framework Programme (FP7) for
Research of the European Commission, under the FET-Proactive grant agreement
TOPDRIM, number FP7-ICT-318121.



\begin{thebibliography}{99}

\bibitem{reidys} J.E. Andersen, R.C. Penner, C.M. Reidys, M.S. Waterman, Topological classification and enumeration of RNA structures by genus,
\bibitem{chap} G. Chapuy, A new combinatorial identity for unicellular maps, via a direct bijective approach,  Adv. in Appl. Math, 47:4 (2011), 874-893.J. Math. Biol. 2013 Nov;67(5):1261-78

\bibitem{chr-1} R.X.F. Chen, C.M. Reidys, Another combinatorial proof of a result of Zagier and Stanley, arXiv:1502.07674 [math.CO].
\bibitem{chr-2} R.X.F. Chen, C.M. Reidys, A simple framework on sorting permutations, arXiv:1502.07971 [math.CO].
\bibitem{chr-3} R.X.F. Chen, C.M. Reidys, Narayana polynomials and some generalizations, arXiv:1411.2530v2 [math.CO].

\bibitem{duke} A. Duke, The genus, regional number, and Betti number of a graph, Canad. J. Math. 18 (1966), 817-822.
\bibitem{edmonds} J. Edmonds, A Combinatorial Representation for Polyhedral Surfaces, Notices Amer. Math. Soc., vol. 7, (1960) A646.

\bibitem{furst} M.L. Furst, J.L. Gross, L.A. MeGeoch, Find a maximum genus graph imbedding, J. Assoc. Comput. Math., 35 (1988), 253-534.
\bibitem{gross} J.L. Gross, T.W. Tucker, Stratified graphs for imbedding systems,
Discrete Mathematics - DM, vol. 143, no. 1-3, 71-85, 1995.
\bibitem{IJ} I.P. Goulden, A. Nica, A direct bijection for the Harer-Zagier formula, J. Combin. Theory Ser. A, 111(2):224-238, 2005.
\bibitem{ag} A. Goupil, G. Schaeffer, Factoring $n$-cycles and counting maps of given genus, European J. Combin., 19(7):819-834, 1998.
\bibitem{hz} J. Harer, D. Zagier, The Euler characteristics of the moduli space of curves, Invent. Math., 85(3): 457-485, 1986.

\bibitem{liu2} Y. Huang, Y. Liu, Face size and the maximum genus of a graph 1: Simple graphs, J. Combin. Theory Ser. B 80 (2000), 356-370.

\bibitem{jac} D.M. Jackson, Counting cycles in permutations by group characters, with an application to a topological
problem, Trans. Amer. Math. Soc., 299(2):785-801, 1987.
\bibitem{jung} M. Jungerman, A characterization of upper-embeddable graphs, TIans. Amer. Math. Soc. 241 (1978), 401-406.
\bibitem{martin} M. Kotrb\v{c}\'{i}k, Martin \v{S}koviera, Locally-maximal embeddings of graphs in orientable surfaces,
The Seventh European Conference on Combinatorics, Graph Theory and Applications
CRM Series Volume 16, 2013, pp 215-220.
\bibitem{kmr} K. Kawarabayashi, B. Mohar, B.A. Reed, A Simpler Linear Time Algorithm for Embedding Graphs into an Arbitrary Surface 
and the Genus of Graphs of Bounded Tree-Width, FOCS 2008: 771-780.
\bibitem{liu} Y.P. Liu, The maximum non-orientable genus of a graph (in Chinese), Scientia Sinica (Special Issue on Math),I(1979),191-201.
\bibitem{lzv} S.K. Lando, A.K. Zvonkin, Graphs on surfaces and their applications, Encyclopaedia Math. Sci. 141, Springer- Verlag, Berlin, 2004.
\bibitem{nebe} L. Nebesky, A new characterizations of the maximum genus of graphs, Czechoslovak Math. J., 31 (106) (1981), 604-613.

\bibitem{white} E. Nordhaus, B. Stewart, A. White, On the maximum genus of a graph, J. Combin. Theory B, 11 (1971), 258-267.
\bibitem{pen} R.C. Penner, M. Knudsen, C. Wiuf, J.E. Andersen, Fatgraph models of proteins,
Comm. Pure Appl. Math., 63 1249-1297, 2010.
\bibitem{youngs} G. Ringel, J.W.T. Youngs, Solution of the Heawood map-coloring problem, Proc. Nat. Acad. Sci. U.S.A. 60 (1968), 438-445.
\bibitem{spru} R. Sprugnoli, Riordan array proofs of identities in Gould's book, 2006.
\bibitem{stan} P.R. Stanley, Factorization of permutation into $n$-cycles, Discrete Math. 37 (1981), 255-262.
\bibitem{thoma} C. Thomassen, The graph genus problem is NP-complete, J. Algorithms 10 (1989) 568-576.
\bibitem{thoma2} C. Thomassen, Embedding of graphs with no short non-contractible cycles, J. Comb. Theory, Ser. B 48 (1990), 155-177.
\bibitem{liu3} L. Wan, Y. Liu, Orientable embedding genus distribution for certain types of graphs,
J. Comb. Theory, Ser. B 98(1): 19-32 (2008).


\bibitem{walsh1} T.R.S. Walsh, A.B. Lehman, Counting rooted maps by genus I, J. Combinatorial Theory Ser. B13 (1972) 192-218.


\bibitem{xuong2} N.H. Xuong, How to determine the maximum genus of a graph, J. Combin. Theory Ser. B 26 (1979), 217-225.
\bibitem{xuong1} N.H. Xuong, Upper embeddable graphs and related topics, J. Combin. Theory B, 26 (1979), 226-232.








\bibitem{zv} A. Zvonkin, Matrix Integrals and Map Enumeration: An Accessible Introduction, Mathematical and Computer Modelling, Vol. 26 (1997).







\bibitem{zag} D. Zagier, On the distribution of the number of cycles of elements in symmetric groups,
 Nieuw Arch. Wisk. (4),13, No. 3 (1995), 489-495.

\end{thebibliography}
\end{document}